\documentclass[11pt,reqno]{amsart}
\usepackage[margin=1in]{geometry}

\usepackage{amssymb}
\usepackage{graphicx} 
\usepackage{mathrsfs}
\usepackage{enumerate}
\usepackage{xspace}
\usepackage{comment}
\usepackage{color}
\usepackage{enumerate}
\usepackage[shortlabels]{enumitem}
\usepackage{amsthm}
\usepackage{mathtools}
\mathtoolsset{showonlyrefs}
\usepackage{dsfont}
\usepackage{bbm}
\usepackage[colorlinks=true,linkcolor=blue]{hyperref}
\usepackage{mathscinet}
\usepackage[backend=biber,sorting=nty,natbib=true,isbn=false,doi=false,url=false,maxbibnames=5,style=ieee]{biblatex}
\usepackage[backgroundcolor=white,bordercolor=red]{todonotes}
\setuptodonotes{inline}
\DeclareMathAlphabet{\mathpzc}{OT1}{pzc}{m}{it}
\numberwithin{equation}{section}

\DeclareSourcemap{
  \maps[datatype=bibtex]{
    \map{
      \step[fieldset=language, null]
    }
  }
}
\makeatletter
\def\@settitle{\begin{center}%
		\baselineskip14\p@\relax
		\bfseries
		\uppercasenonmath\@title
		\@title
		\ifx\@subtitle\@empty\else
		\\[1ex]\uppercasenonmath\@subtitle
		\footnotesize\mdseries\@subtitle
		\fi
	\end{center}%
}
\def\subtitle#1{\gdef\@subtitle{#1}}
\def\@subtitle{}

\makeatletter
\def\@settitle{\begin{center}%
		\baselineskip14\p@\relax
		\bfseries
		\uppercasenonmath\@title
		\@title
		\ifx\@subtitle\@empty\else
		\\[1ex]\uppercasenonmath\@subtitle
		\footnotesize\mdseries\@subtitle
		\fi
	\end{center}%
}
\def\subtitle#1{\gdef\@subtitle{#1}}
\def\@subtitle{}
\makeatother

\usepackage{bbm}
\usepackage[normalem]{ulem}
\usepackage{cancel}

\newcommand{\cA}{\mathcal{A}}

\newcommand{\cE}{\mathcal{E}}
\newcommand{\cF}{\mathcal{F}}
\newcommand{\cG}{\mathcal{G}}

\newcommand{\cK}{\mathcal{K}}

\newcommand{\cR}{\mathcal{R}}

\newcommand{\cU}{\mathcal{U}}

\newcommand{\fP}{\mathfrak{P}}

\newcommand{\bN}{\mathbb{N}}

\newcommand{\bR}{\mathbb{R}}

\begin{document}




\theoremstyle{plain}

\newtheorem{theorem}{Theorem}[section]
\newtheorem{lemma}[theorem]{Lemma}
\newtheorem{proposition}[theorem]{Proposition}
\newtheorem{corollary}[theorem]{Corollary}
\newtheorem{assumption}[theorem]{Assumption}
\newtheorem{condition}[theorem]{Condition}
\newtheorem{definition}[theorem]{Definition}
\newtheorem*{lemmaOhne}{Lemma}

\theoremstyle{definition}

\newtheorem{example}[theorem]{Example}
\newtheorem{remark}[theorem]{Remark}
\newtheorem{SA}[theorem]{Assumption}
\newtheorem{discussion}[theorem]{Discussion}
\newtheorem{remarks}[theorem]{Remark}
\newtheorem*{notation}{Remark on Notation}
\newtheorem{application}[theorem]{Application}

\newcommand{\of}{[\hspace{-0.06cm}[}
\newcommand{\gs}{]\hspace{-0.06cm}]}

\newcommand\llambda{{\mathchoice
		{\lambda\mkern-4.5mu{\raisebox{.4ex}{\scriptsize$\backslash$}}}
		{\lambda\mkern-4.83mu{\raisebox{.4ex}{\scriptsize$\backslash$}}}
		{\lambda\mkern-4.5mu{\raisebox{.2ex}{\footnotesize$\scriptscriptstyle\backslash$}}}
		{\lambda\mkern-5.0mu{\raisebox{.2ex}{\tiny$\scriptscriptstyle\backslash$}}}}}

\newcommand{\1}{\mathds{1}}

\newcommand{\F}{\mathbf{F}}
\newcommand{\G}{\mathbf{G}}

\newcommand{\B}{\mathbf{B}}

\newcommand{\M}{\mathcal{M}}

\newcommand{\la}{\langle}
\newcommand{\ra}{\rangle}

\newcommand{\lle}{\langle\hspace{-0.085cm}\langle}
\newcommand{\rre}{\rangle\hspace{-0.085cm}\rangle}
\newcommand{\blle}{\Big\langle\hspace{-0.155cm}\Big\langle}
\newcommand{\brre}{\Big\rangle\hspace{-0.155cm}\Big\rangle}

\newcommand{\X}{X}

\newcommand{\tr}{\operatorname{tr}}
\newcommand{\N}{{\mathbb{N}}}
\newcommand{\cadlag}{c\`adl\`ag }
\newcommand{\on}{\operatorname}
\newcommand{\oP}{\overline{P}}
\newcommand{\oO}{\mathcal{O}}
\newcommand{\D}{\mathsf{D}} 
\newcommand{\bx}{\mathsf{x}}
\newcommand{\bb}{\hat{b}}
\newcommand{\bs}{\hat{\sigma}}
\newcommand{\bv}{\hat{v}}
\renewcommand{\v}{\mathfrak{m}}
\newcommand{\ob}{\bar{b}}
\newcommand{\oa}{\bar{a}}
\newcommand{\os}{\widehat{\sigma}}
\renewcommand{\j}{\varkappa}
\newcommand{\scl}{\ell}
\newcommand{\Y}{\mathscr{Y}}
\newcommand{\Z}{\mathscr{Z}}
\newcommand{\T}{\mathcal{T}}
\newcommand{\con}{\mathsf{c}}
\newcommand{\nk}{\hspace{-0.25cm}{{\phantom A}_k^n}}
\newcommand{\nl}{\hspace{-0.25cm}{{\phantom A}_1^n}}
\newcommand{\nm}{\hspace{-0.25cm}{{\phantom A}_2^n}}
\newcommand{\n}{\hspace{-0.35cm}{\phantom {Y_s}}^n}
\newcommand{\nme}{\hspace{-0.35cm}{\phantom {Y_s}}^{n - 1}}
\renewcommand{\o}{\hspace{-0.35cm}\phantom {Y_s}^0}
\newcommand{\e}{\hspace{-0.4cm}\phantom {U_s}^1}
\newcommand{\z}{\hspace{-0.4cm}\phantom {U_s}^2}
\newcommand{\iii}{|\hspace{-0.05cm}|\hspace{-0.05cm}|}
\newcommand{\co}{\overline{\on{co}}}
\renewcommand{\k}{\mathsf{k}}
\newcommand{\ovb}{\overline{b}}
\newcommand{\ova}{\overline{a}}
\newcommand{\s}{\mathfrak{s}}
\newcommand{\opsi}{\overline{\Psi}}
\newcommand{\ol}{\mathcal{L}}
\newcommand{\oD}{\overline{D}}
\newcommand{\ua}{\underline{a}}
\newcommand{\ou}{\overline{b}}
\newcommand{\uu}{\underline{b}}

\renewcommand{\epsilon}{\varepsilon}
\renewcommand{\rho}{\varrho}

\newcommand{\fPs}{\fP_{\textup{sem}}}
\newcommand{\fPas}{\mathfrak{S}_{\textup{ac}}}
\newcommand{\rrarrow}{\twoheadrightarrow}
\newcommand{\ocA}{\mathcal{U}}
\newcommand{\bth}{\overset{\leftarrow}\theta}
\renewcommand{\th}{\theta}
\newcommand{\fPasn}{\mathfrak{S}^{\textup{ac}, n}_{\textup{sem}}}
\newcommand{\CLM}{\mathfrak{M}^\textup{ac}_\textup{loc}}
\newcommand{\Sd}{\mathcal{S}^\textup{sp}_{\textup{d}}}
\newcommand{\Sc}{\mathcal{S}}
\newcommand{\Sac}{\mathcal{S}_\textup{ac}}
\newcommand{\A}{\mathsf{A}}
\newcommand{\Td}{\mathsf{T}^\textup{d}}
\renewcommand{\t}{\mathfrak{t}}
\newcommand{\UC}{\hspace{-0.03cm}\textit{UC}}

\newcommand{\nnabla}{\nabla}
\newcommand{\f}{\mathfrak{f}}
\newcommand{\g}{\mathfrak{g}}
\newcommand{\oconv}{\overline{\on{co}}\hspace{0.075cm}}
\renewcommand{\a}{\mathfrak{a}}
\renewcommand{\b}{\mathfrak{b}}
\renewcommand{\d}{\mathsf{d}}
\newcommand{\bS}{\mathbb{S}^d_+}
\newcommand{\p}{\mathsf{p}}
\newcommand{\dr}{r} 
\newcommand{\m}{\mathbb{M}} 
\newcommand{\Q}{Q}
\newcommand{\usc}{\textit{USC}}
\newcommand{\lsc}{\textit{LSC}}
\renewcommand{\q}{\mathfrak{q}}
\newcommand{\W}{\mathscr{W}}
\newcommand{\w}{\mathsf{w}}
\newcommand{\oM}{\mathsf{M}}
\newcommand{\oZ}{\mathsf{Z}}
\newcommand{\oK}{\mathsf{K}}
\renewcommand{\Re}{\operatorname{Re}}
\newcommand{\cCk}{\mathsf{c}_k}
\newcommand{\C}{\mathsf{C}}
\newcommand{\oPi}{\overline{\Pi}}
\renewcommand{\P}{\mathbf{P}}
\renewcommand{\Q}{\mathsf{Q}}
\renewcommand{\Y}{\mathsf{Y}}
\renewcommand{\Z}{\mathsf{Z}}
\newcommand{\E}{\mathbf{E}}
\renewcommand{\Q}{\mathbf{Q}}
\renewcommand{\bS}{\mathbb{S}}
\newcommand{\USA}{\textit{USA}}
\renewcommand{\p}{\partial_t}
\renewcommand{\d}{d}
\newcommand{\ca}{\on{ca} \hspace{0.01cm}}
\renewcommand{\cR}{\mathcal{R}}
\newcommand{\RPF}{\mathsf{RPF}}
\renewcommand{\c}{\alpha\hspace{0.02cm}} 
\newcommand{\cFs}{\cF_s}
\newcommand{\cFt}{\cF_t}
\newcommand{\hcE}{\widehat{\cE}}
\newcommand{\hcU}{\widehat{\cU}}
\newcommand{\hc}{\widehat{\c}}
\newcommand{\hT}{\widehat{T}}
\newcommand{\lowersup}[1]{^{\raisebox{-0.3ex}{$\scriptstyle #1$}}}

\newcommand{\Lip}{\textit{Lip}}
\newcommand{\urange}{\bR}

\renewcommand{\emptyset}{\varnothing}

\newcommand{\ff}[1]{{\color{green!60!black}#1}}
\newcommand{\om}{\omega}
\newcommand{\tom}{\widetilde{\omega}}
\newcommand{\tlam}{\widetilde{\lambda}}

\allowdisplaybreaks

\makeatletter

\title[Risk-sensitive exit-time control for SDEs]{Risk-sensitive exit-time control for stochastic differential equations with path-dependent coefficients} 
\author[D. Criens]{David Criens}
\address{D.\ Criens - University of Freiburg, Ernst-Zermelo-Str.\ 1, 79104 Freiburg, Germany.}
\email{david.criens@stochastik.uni-freiburg.de}

\author[F. Fuchs]{Fabian Fuchs}
\address{F.\ Fuchs - Department of AI, Data and Decision Sciences, Luiss Guido Carli, Viale Romania 32, 00197 Roma, Italy.}
\email{ffuchs@luiss.it}

\thanks{FF is supported by the Italian Ministry of University and Research (MIUR) within the framework of PRIN project 20223PNJ8K}
\date{\today}

\begin{abstract}
In this work, we study small-noise asymptotics of risk-sensitive exit-time control problems governed by stochastic differential equations with path-dependent coefficients. Our main result establishes the convergence of the $\log$-transformed exit-time problem to a deterministic control problem with path-dependent coefficients. For its proof, we first derive a novel variational representation for general $\log$-transformed stochastic control problems with path-dependent coefficients, combining tools from the theory of path-dependent partial differential equations and convex expectations on path spaces. In a second step, we use probabilistic methods to analyze the convergence of the resulting variational formulas.\ To illustrate the scope of our analysis, we consider a computable example for a stochastic differential equation with memory and characterize the limiting problem and associated control strategies.

\smallskip\noindent
\emph{Key words:} Small-noise Limit, Variational Formula, Path-dependent PDE, Large Deviations, Exit-time Problem, Viscosity Solution, Hamilton--Jacobi--Bellman Equation
   
\smallskip\noindent 
\emph{AMS 2020 Subject Classification:} Primary 93E03, 93E20, 35D40  Secondary 60H10, 60F10, 49L25
\end{abstract}

\maketitle

\section{Introduction}

Consider the $d$-dimensional control system
\[
    dX_t = \mu(t,X,\lambda_t)\,dt+\sqrt{\varepsilon}\,
    \sigma(t,X,\lambda_t)\,dW_t,
\]
where the coefficients $\mu$ and $\sigma$ may depend on the whole past of the path $X$ in a non-anticipating way. 
The controller acts through the coefficients on the diffusion with the aim of keeping the process inside a bounded domain $D\subseteq\mathbb{R}^d$ until a goal has been reached. We formulate this as a risk-sensitive exit-time control problem, where the controller maximizes
\[
    \mathbb{E}\Bigl[\exp\bigl(g(\tau_D)/\varepsilon\bigr)\Bigr],
    \qquad
    \tau_D:=\inf\{t\geq 0 \colon X_t\notin D\},
\]
for an increasing, bounded, and continuous function \(g\).
In this form, the criterion assigns exponentially larger weights to paths with large exit times. The function $g$ determines how strongly later exits are rewarded and encodes whether the controller's goal has been reached. 
We are then interested in quantifying how small stochastic perturbations influence the controlled system and, in particular, in the limit of the logarithmically transformed value as the noise vanishes, i.e., the limit of
\[
    V^\varepsilon
    := \varepsilon \log \sup_\lambda
    \mathbb{E}\Bigl[\exp\big(g(\tau_D)/\varepsilon\big)\Bigr]
\]
as $\varepsilon\searrow 0$.
Our main result, Theorem~\ref{theo: main result}, identifies the limit of $V^\varepsilon$ as the value of a deterministic path-dependent optimal control problem, which features an additional control term in the drift of the controlled system and a quadratic penalization in the reward.

Risk-sensitive exit-time control problems, also referred to as risk-sensitive escape control problems, have been studied, for instance, in \cite{BD_01,DE_97,FlemSon_06} for diffusion models with state-dependent coefficients and uncontrolled volatility. They arise, for example, in tracking problems and safety analyses of engineered control systems; see the recent paper \cite{GoBuWi26} as well as the references therein. We emphasize that most previous works in this area focus on minimization criteria for the controller, whereas we consider a maximization problem. This distinction affects the form of the limiting object as, in our setting, the small-noise limit is a control problem rather than a two-player game. 

The main difficulty in proving any such convergence result is that the small-noise limit and optimization over the controls have to be taken simultaneously. Since limits and suprema need not commute, it is not immediate that the limiting value is obtained by replacing the stochastic dynamics by their deterministic counterpart. The literature predominantly approaches this problem in two ways: 

The PDE approach, developed by Fleming, cf.\ \cite{J_92, FlemSon_06}, relates the logarithmically transformed value $V^\varepsilon$ to a Hamilton--Jacobi--Bellman equation and identifies the limit via a comparison principle for viscosity solutions. 
The variational approach of Bou\'e and Dupuis, see \cite{BD_98,BD_01}, instead relates $V^\varepsilon$ to an auxiliary stochastic control problem with quadratic control costs. The resulting variational formula can then be used to analyze the limiting object by probabilistic methods.

The approach in this work extends and combines the PDE and variational approaches in a path-dependent control framework. 
In Theorem~\ref{theo: VF}, we build on the recent results on path-dependent PDEs (PPDEs) in \cite{Z_23} and a comparison principle for convex expectations in \cite{CK_25} to derive a variational representation for the log-transform of general
control problems with path-dependent coefficients.
We believe this result to be of independent interest to the community.
In a second step, we use probabilistic methods to identify the small-noise limit of the resulting variational formulas. This provides a path-dependent counterpart of the Bou\'e--Dupuis approach that is flexible enough to treat measurable functions of the underlying state beyond our particular choice $g(\tau_D)$.

Compared to most of the existing literature, the coefficients in our setup are allowed to depend on the past of the path; see, e.g., \cite{BVLaTa20} who deal with Sanov and Schilder problems and conjecture that a PPDE connection can be made in certain non-Markovian settings.
In the uncontrolled case with path-dependent coefficients, \cite{MRTZ_16} lifts the PDE approach to a non-Markovian setting, replacing some classical PDE arguments by tools from BSDE theory and relating the limiting object to a first-order path-dependent PDE. We also refer to \cite{PeSoWa23} for an overview of path-dependent PDE theory and to \cite{ZhToZh25, ReToZh20} and the references therein for further recent developments.

At this point, we want to highlight that the reward of the controller is discontinuous as the hitting time $\tau_D$ is discontinuous. This presents an intrinsic obstacle in the analysis of exit-time problems as the available PPDE methods seem to apply only to situations with regular boundary conditions.
Consequently, such viscosity methods seem, at the time of writing, not to be applicable to our problem, motivating our combined analytic-probabilistic approach.

The rest of the paper is organized as follows: In Section~\ref{sec: laplace-principle}, we introduce the stochastic control framework, state the assumptions, our main result, Theorem \ref{theo: main result}, and the explicitly computable example in the case of a controlled SDE with memory.
Section~\ref{sec: variational-formula} is devoted to deriving the variational formula for the entropic transform via PPDE methods. Section \ref{sec: convergence relaxed} contains the convergence result for relaxed control rules needed to identify the small-noise limit. Finally, Section \ref{sec: proof main} uses the results of the previous sections to show the main result.

\section{Small-noise Asymptotics for Exit Problems} \label{sec: laplace-principle}

We start by introducing the mathematical framework for the control system
\begin{align} \label{eq: control system main section}
    dX_t = \mu(t,X,\lambda_t)\,dt + \sqrt{\varepsilon} \, \sigma(t,X,\lambda_t)\,dW_t
\end{align}
that we investigate in this paper. 
For $d \in \bN$, let \(\Omega := C (\bR_+; \bR^d)\) be the set of continuous $\bR^d$-valued paths, endowed with the local uniform topology, and denote the coordinate map \(\X_t (\omega) = \omega (t)\) for all \(t \in \bR_+\) and \(\omega \in \Omega\). Furthermore, let \(\cF := \mathcal{B}(\Omega) = \sigma (\X_s, s \in \bR_+)\) and \(\cF_t := \sigma (\X_s, s \in [0, t])\) for \(t \in \bR_+\). 
Moreover, we suppose that the action space \(\Lambda\) is a compact, metrizable space and consider two coefficients
\begin{align*}
	&\mu \colon \bR_+ \times \Omega \times \Lambda \to \bR^d, \qquad 
	\sigma \colon \bR_+ \times \Omega \times \Lambda \to \bR^{d \times r},
\end{align*}
where \(r \in \mathbb{N}\) represents the dimension of the noise. Throughout we impose the following conditions. For transparency, in our proof sections, we indicate precisely which parts of the assumption are used. 

\begin{assumption} \label{SA: control}
Throughout, we impose the following assumptions:
\begin{enumerate}[(A)]
\item	The coefficients \(\mu\) and \(\sigma\) are Borel measurable and, for every $\omega \in \Omega$, the map \((t,\lambda) \mapsto (\mu (t, \omega, \lambda), \sigma (t, \omega, \lambda))\) is continuous. \label{item:SA:continuity in control}

\item For every \(\lambda \in \Lambda\), the maps \((t, \omega) \mapsto \mu (t, \omega, \lambda)\) and \((t, \omega) \mapsto \sigma (t, \omega, \lambda)\) are non-anticipative,
	i.e.,  \(\mu (t, \omega, \lambda)\) and \(\sigma (t, \omega, \lambda)\) depend on \(\omega\) only through \((\omega (s))_{s \leq t}\). 
\item\label{item:SA:Lipschitz} The coefficients \(\mu\) and \(\sigma\) are uniformly equi-Lipschitz, i.e., for every \(T > 0\), there exists a constant \(L = L_T > 0\) such that, for all \(t \in [0, T], \omega, \omega' \in \Omega\) and \(\lambda \in \Lambda\), 
\[
\| \mu (t, \omega, \lambda) - \mu (t, \omega', \lambda) \| + \| \sigma (t, \omega, \lambda) - \sigma (t, \omega', \lambda) \| \leq L \| \omega - \omega' \|_t, 
\]
where \(\| \omega \|_t := \sup_{s \in [0, t]} \| \omega (s) \|\).
\item\label{item:SA:growth} The coefficients \(\mu\) and \(\sigma\) are of linear growth, i.e., for every \(T > 0\), there exists a constant \(C = C_T > 0\) such that, for all \(t \in [0, T], \omega \in \Omega\) and \(\lambda \in \Lambda\), 
\[
\| \mu (t, \omega, \lambda)\| + \| \sigma (t, \omega, \lambda) \| \leq C ( 1 + \| \omega \|_t ).
\]

\item\label{item:SAg} The reward \(g \colon [0, \infty] \to \bR\) is an increasing, bounded, and continuous function. 
\end{enumerate} 
\end{assumption}

We now introduce the relaxed control framework that we use to formalize the control system \eqref{eq: control system main section}. For comments on the relation to other control frameworks, see Remark~\ref{rem: other control frameworks} below. 
In the relaxed control setting, controls are modeled as elements of the space \(\m\) of all Radon measures on \(\bR_+ \times \Lambda\), whose projections to \(\bR_+\) coincide with the Lebesgue measure. 
We endow \(\m\) with the vague topology, which turns it into a compact metrizable space; see \cite[Theorem I.2.2]{EKNJ88}. 
The coordinate map on \(\m\) is denoted by \(M\).
Define the \(\sigma\)-field \[\M := \sigma (M_{t, s} (\phi)\colon \, t < s,\, \phi \in C_{c} (\bR_+ \times \Lambda; \bR)),\] where
\[
M_{t, s} (\phi) := \int_t^s \int_\Lambda \phi (r, \lambda) \, M(dr, d\lambda).
\]
The product space \((\Omega \times \m, \cF \otimes \M)\) captures the controlled process together with its control. Adapting the above notation, we denote the coordinate process on this space by \((X, M)\). 

For \(\varepsilon > 0\) and \(\varphi \in C^2_b (\bR^d; \bR)\), we define
\begin{align} \label{eq: C}
	N_{t, s}^\varepsilon (\varphi) := \varphi (X_s) - \int_{t}^{s} \int_\Lambda  L^\varepsilon (X_r, r, X, \lambda, \varphi ) \, M (dr , d \lambda), 
\end{align}
where
\begin{align} \label{eq: L}
	L^\varepsilon (x, s, \omega, \lambda, \varphi) := \langle \mu (s, \omega, \lambda), \nabla \varphi (x) \rangle + \tfrac{\varepsilon}{2} \on{tr} \big[ \sigma (s, \omega, \lambda) \sigma^* (s, \omega, \lambda) \, \nabla^2 \varphi (x) \big].
\end{align}
A relaxed control rule with initial value \((t, \omega) \in \bR_+ \times \Omega\) is then a probability measure \(P\) on the product space \((\Omega \times \m, \mathcal{F} \otimes \M)\) with
\[P (X = \omega \text{ on } [0 , t]) = 1,\] 
such that, for every  \(\varphi \in C^2_b (\bR^d; \bR)\), the process \((N^\varepsilon_{t, s} (\varphi))_{s \geq t}\) is a \(P\)-martingale for the filtration
\[
\cG_r := \sigma \big(X_s, M_{t, s} (\phi)\colon\, t \leq s \leq r,\, \phi \in C_c (\bR_+ \times \Lambda; \bR)\big), \quad r \geq t.
\]
For \((t, \omega) \in \bR_+ \times \Omega\), we define 
\begin{align} \label{eq: KR}
	\cK^\varepsilon (t, \omega)&:= \Big\{ \text{all relaxed control rules with initial value } (t, \omega) \Big \}.
\end{align}
The control problem associated to these control rules is given by the sublinear expectation 
\[
\cE_t^\varepsilon (\varphi) (\omega) := \sup_{ P \in \cK^\varepsilon (t, \omega)} E^P \big[ \varphi \big], 
\]
where \(\varphi \colon \Omega \to [- \infty, \infty]\) is an upper semianalytic function, i.e., for every \(t \in \bR\), the upper level set \(\{ \varphi \geq t \}\) is analytic in \(\Omega\). We stress that any Borel function is clearly upper semianalytic. Furthermore, any upper semianalytic function is universally measurable, cf.\  \cite[Section~7.7]{bershre}.

From now on, we fix a bounded domain \(D \subseteq \bR^d\) and set 
\[
\tau_D^t = \inf \{s \geq t \colon \X_s \not \in D \}.
\]
For a reward function \(g \colon [0, \infty] \to \bR\) as in Assumption~\ref{SA: control}~\ref{item:SAg}, 
we are interested in the limit of 
\[
V^\varepsilon_{t, \omega} := \varepsilon \log \mathcal{E}_t \big(e^{g (\tau_D^t) / \varepsilon} \big) (\omega)
\]
as \(\varepsilon \searrow 0\). 
\begin{remark} \label{rem: other control frameworks}
In the proof of \eqref{eq: to show 2} below, we show that the map \(\tau^t_D\) is lower semicontinuous on the path space~\(\Omega\). Using this observation, under mild additional assumptions on the time-regularity of the coefficients, it follows from \cite[Theorem~4.10]{ElKa15}, taking also localizations in the spirit of \cite[Remark~4.11]{ElKa15} into account, that \(V^\varepsilon_{t, \omega}\) has a strong (and weak) control formulation. As these frameworks are commonly used in applications, let us describe the precise statement of the relation to the strong framework under the simplifying assumption \(d = r\). 
Let \(\mathbb{W}\) be the \(d\)-dimensional Wiener measure on \((\Omega, \cF)\) and denote by \((\overline{\cF}_t)_{t \geq 0}\) the \(\mathbb{W}\)-augmentation of the canonical filtration \((\cF_t)_{t \geq 0}\). 
Moreover, let \(\mathcal{A}\) be the set of all \(\Lambda\)-valued \((\overline{\cF}_t)_{t \geq 0}\)-progressively measurable processes. Then, for every \((t, \omega) \in \bR_+ \times \Omega\) and \((\lambda_s)_{s \geq 0} \in \mathcal{A}\), standard existence and uniqueness results, cf.\ \cite[Theorem~14.30]{jacod79}, show that the SDE 
\[
\begin{cases} d Y^{\lambda, t, \omega, \varepsilon}_s = \mu (s, Y^{\lambda, t, \omega, \varepsilon}, \lambda_s) \, ds + \sqrt{\varepsilon} \, \sigma (s, Y^{\lambda, t, \omega, \varepsilon}, \lambda_s) \, d X_s, & s > t, \\ Y^{\lambda, t, \omega, \varepsilon}_s = \omega (s), & s \leq t, \end{cases} 
\]
has a (pathwise) unique strong solution process \((Y^{\lambda, t, \omega, \varepsilon}_s)_{s \geq 0}\) on the filtered probability space \((\Omega, \cF, (\overline{\cF}_s)_{s \geq 0}, \mathbb{W})\), where we recall that \(X\) is a \(\mathbb{W}\)-Brownian motion. Assume the following uniform time-regularity assumption on the coefficients \(\mu\) and \(\sigma\): 
For every \(T > 0\) and \(N > 0\), there exists a modulus of continuity \(w = w_{T, N}\) such that
\[
\| \mu (t, \omega (\cdot \, \wedge s), \lambda) - \mu(s, \omega (\cdot \, \wedge s), \lambda) \| + \| \sigma (t, \omega (\cdot \, \wedge s), \lambda)  - \sigma (s, \omega (\cdot \, \wedge s), \lambda) \| \leq w (| t - s| )
\]
for all \(0 \leq s \leq t \leq T, \omega \in \Omega\) with \(\| \omega \|_T \leq N\), and \(\lambda \in \Lambda\).
Then, a localization of \cite[Theorem~4.10]{ElKa15} shows that 
\begin{align*}
    V^{\varepsilon}_{t, \omega} = \varepsilon \log \sup_{\lambda \in \mathcal{A}} E^\mathbb{W} \Big[ \exp \Big( \frac{g (\tau_D^t (Y^{\lambda, t, \omega, \varepsilon}))}{\varepsilon} \Big) \Big], 
\end{align*}
which is a strong control formulation.

In this paper, we choose to present the problem in its relaxed formulation as it will be necessary for the proof of the variational formula in Theorem \ref{theo: VF} and we aim to illustrate how the PDE and variational approaches can be combined in a cohesive manner.
\end{remark} 

In our main theorem, Theorem \ref{theo: main result}, below, we show that the limiting object of \(V^\varepsilon_{t, \omega}\) as \(\varepsilon \searrow 0\) is given by the following deterministic control problem 
\begin{align*}
    V^0_{t, \omega} := \sup_{\substack {h \in \mathbb{H} \\ m \in \m}} \Big\{ g (\tau_D^t (Y^{h, m, t, \omega})) - \frac{1}{2} \int_0^\infty \|\dot{h} (s)\|^2 \,ds \Big\}, 
\end{align*}
where the set \(\mathbb{H} = \{ h = \int_0^\cdot \dot{h} (s) \, ds \colon \dot{h} \in L^2 (\bR_+; \bR^r) \}\) is the Brownian Cameron--Martin space, and \(Y^{h, m, t, \omega}\) is the unique solution to the functional ODE
\begin{align*}
	\begin{cases} d Y^{h, m, t, \omega}_{s} = \int_\Lambda \big( \mu (s, Y^{h, m, t, \omega}, \lambda) + \sigma (s, Y^{h, m, t, \omega}, \lambda) \, \dot{h} (s) \big) \, m (ds, d\lambda), & \text{for } s > t, \\ Y^{h, m, t, \omega}_s = \omega (s), & \text{for } s \leq t,
	\end{cases} 
\end{align*} 
which exists due to the Lipschitz assumption on the coefficients \(\mu\) and \(\sigma\), see Assumption~\ref{SA: control}~\ref{item:SA:Lipschitz} and~\ref{item:SA:growth}, e.g., by \cite[Theorem~14.30]{jacod79}. 

Before we can state our main result, we need to introduce a final object to exclude some inconvenient boundary behavior of the limiting object. 
For \(\delta > 0\), consider the $\delta$-blow-up of $D$, \(D_\delta := \{ x \in \bR^d : \inf_{y \in D} \| x - y \| \leq \delta \}\), which is a closed set as the map \(x \mapsto \inf_{y\in D} \|x - y\|\) is continuous. 
We set 
\[
    V^{0, \delta}_{t, \omega} := \sup_{\substack {h \in \mathbb{H} \\ m \in \m}} \Big\{ g (\tau_{D_\delta}^t (Y^{h, m, t, \omega})) - \frac{1}{2} \int_0^\infty \|\dot{h} (s)\|^2 \,ds \Big\}.
\]
\begin{remark}
    In the spirit of Remark~\ref{rem: other control frameworks}, one may ask whether the value functions \(V^0\) and \(V^{0, \delta}\) can be formulated within a ``strong control formulation'' via control processes rather than relaxed controls. It is worth mentioning that such a reformulation follows as in Remark~\ref{rem: other control frameworks} for the limiting problem \(V^0\) but not for its approximation \(V^{0, \delta}\).\ The intrinsic difference between these control problems is the semicontinuity of the exit times: \(\tau_D\) is lower semicontinuous, while \(\tau_{D_\delta}\) is upper semicontinuous.
\end{remark}

Finally, define 
\[
    \T := \Big\{ (t, \omega) \in \bR_+ \times \Omega : \lim_{\delta \searrow 0} V^{0, \delta}_{t, \omega} = V^{0}_{t, \omega}\Big\}.
\]
Now, we are in the position to give the main result of this paper.
\begin{theorem} \label{theo: main result} 
Let Assumption~\ref{SA: control} hold.
For every \((t, \omega) \in \T\), every sequence \((t_n, \omega_n)_{n = 1}^\infty \subseteq \bR_+ \times \Omega\) with \((t_n, \omega_n) \to (t, \omega)\), and \(\varepsilon_n \searrow 0\), we have  
\begin{align} \label{eq: main convergence statement}
\lim_{n \to \infty} V^{\varepsilon_n}_{t_n, \omega_n} = V^0_{t, \omega}.
\end{align} 
\end{theorem}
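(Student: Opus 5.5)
The proof splits into two stages, as the introduction announces. First, I take the variational representation of Theorem~\ref{theo: VF} as given. For bounded upper semianalytic $F$, it should read
\[
\varepsilon \log \cE^\varepsilon_t\big(e^{F/\varepsilon}\big)(\omega) = \sup_{Q} E^Q\Big[ F(X) - \tfrac12 \int_t^\infty \|\dot h(s)\|^2\,ds \Big],
\]
where $Q$ ranges over relaxed control rules for the controlled equation
\[
dX = \int_\Lambda \big(\mu + \sigma \dot h\big)\, dM + \sqrt{\varepsilon}\, \sigma\, dW,
\]
in which the additional drift control $\dot h$ is progressively measurable and square integrable. I apply this with $F = g(\tau_D^{t_n})$. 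Since $g$ is bounded, continuous and increasing, and $\tau^t_D$ is lower semicontinuous, $F$ is Borel. Boundedness of $g$ lets us restrict to controls with $E^Q[\int\|\dot h\|^2] \le 4\|g\|_\infty$, and this uniform energy bound is what gives tightness.

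\textbf{Lower bound.} Fix $\eta>0$ and choose a deterministic pair $(h,m)$ that is $\eta$-optimal for $V^0_{t,\omega}$. Plug the deterministic control $(h,m)$ into the variational formula with initial data $(t_n,\omega_n)$ and noise $\varepsilon_n$. By Lipschitz continuity and the BDG inequality, the resulting controlled process converges in probability, locally uniformly, to $Y^{h,m,t,\omega}$. Here I use Assumption~\ref{SA: control}~\ref{item:SA:Lipschitz},~\ref{item:SA:growth} and Gronwall, together with $\omega_n\to\omega$ and $t_n\to t$; the time shift only matters on $[t_n\wedge t, t_n\vee t]$, where the dynamics move by $o(1)$. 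Lower semicontinuity of $\tau_D$ then gives $\liminf \tau_D^{t_n}(X^n) \ge \tau^t_D(Y)$ in probability. Since $g$ is increasing and continuous, Fatou yields
\[
\liminf_n V^{\varepsilon_n}_{t_n,\omega_n} \ge V^0_{t,\omega} - \eta .
\]
This direction does not need $(t,\omega)\in\T$.

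\textbf{Upper bound.} Choose $\eta$-optimal $Q_n$ in the variational formula. Consider the laws of $(X, M, h)$ under $Q_n$, with $h$ viewed in the weak topology of $L^2$-balls, which is compact. These laws are tight by the energy bound, the linear growth and the compactness of $\m$. Section~\ref{sec: convergence relaxed} should provide the result that any limit point is supported on triples $(Y^{h,m,t,\omega}, m, h)$ solving the deterministic functional ODE. The key point is identifying the limit of $\int\sigma(s,X,\lambda)\dot h(s)\,M(ds,d\lambda)$. I would handle this via the martingale problem for $N^\varepsilon$ with the modified generator, where the $\varepsilon$-diffusion term vanishes, and use weak-$L^2$ convergence of $\dot h$ against the strongly converging $\sigma(\cdot,X^n,\cdot)$. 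The energy functional is weakly lower semicontinuous. The reward $\omega\mapsto g(\tau_D(\omega))$ is only lower semicontinuous, which is the wrong direction here. I therefore replace $D$ by the closed set $D_\delta$: $\tau_{D_\delta}$ is upper semicontinuous, and $\tau_D^{t_n}(X^n)\le\tau_{D_\delta}^{t_n}(X^n)$. By Skorokhod representation and reverse Fatou,
\[
\limsup_n V^{\varepsilon_n}_{t_n,\omega_n} \le E\Big[ g(\tau^t_{D_\delta}(Y)) - \tfrac12\int \|\dot h\|^2 \Big] + \eta \le V^{0,\delta}_{t,\omega} + \eta .
\]
Letting $\delta\searrow0$ and using $(t,\omega)\in\T$ gives the upper bound.

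\textbf{Main obstacle.} I expect the hardest step to be the identification of limits of relaxed controls coupled with the $\dot h$ drift. The product $\dot h\,M$ mixes a weakly converging $L^2$ control with a vaguely converging measure, so the limit must be shown to have the product form $\int(\mu+\sigma\dot h)\,dm$ with a single $\dot h$ and not a $\lambda$-dependent one. If that fails, I would enlarge the control to a joint relaxed measure on $\bR_+\times\Lambda\times\bR^r$. I would then use convexity of $\|\cdot\|^2$, via Jensen, to project back to a control $(h,m)$ that gives a value at least as large; this works because the dynamics are affine in $\dot h$ for each fixed $\lambda$.
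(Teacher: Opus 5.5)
Your overall architecture matches the paper's. You use the variational formula with the energy cap $4\|g\|_\infty$. Your lower bound drives the $\varepsilon_n$-system with the control of a near-optimal limiting one and uses lower semicontinuity of $(s,\omega)\mapsto\tau^s_D(\omega)$; this is Proposition~\ref{prop: 2 step to Hausdorff}~(b), and that part is fine. Your upper bound passes through $D_\delta$, upper semicontinuity of $\tau_{D_\delta}$, compactness and $(t,\omega)\in\T$. The gap is in the upper bound.

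Theorem~\ref{theo: VF} is not the product-form formula you wrote. Its controls are joint relaxed measures $M^*(ds,d\lambda,dz)$ on $\bR_+\times\Lambda\times\bR^r$, so the extra drift $z$ may depend on $\lambda$. The paper's compactness result (Corollary~\ref{coro: HD convergence}) lives in that class, which is closed under the limits you take; the product class is not. Your primary identification step therefore fails. The map $s\mapsto\int_\Lambda\sigma(s,X^n,\lambda)\,M^n_s(d\lambda)$ converges only weakly in time, so its product with the weakly converging $\dot h^n$ need not converge to the product of the limits. Concretely, take $d=2$, $r=1$, $\Lambda=\{\lambda_1,\lambda_2\}$, $\mu\equiv0$ and $\sigma(\cdot,\lambda_i)=e_i$. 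Use strict controls alternating on intervals of length $1/n$ between $(\lambda_1,\dot h=1)$ and $(\lambda_2,\dot h=-1)$. Then $\dot h^n\rightharpoonup0$, but the velocities alternate between $e_1$ and $-e_2$. The limit trajectory thus moves with velocity $\tfrac12(e_1-e_2)$ at cost $\tfrac12$ per unit time.

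The same example defeats your Jensen fallback. Affinity in $z$ for fixed $\lambda$ only lets you replace $z$ by its conditional mean $\bar z(s,\lambda)=\int z\,m^*_s(dz\mid\lambda)$. It cannot remove the $\lambda$-dependence, because $\int_\Lambda\sigma(s,Y,\lambda)\bar z(s,\lambda)\,m_s(d\lambda)$ need not equal $\big(\int_\Lambda\sigma(s,Y,\lambda)\,m_s(d\lambda)\big)\dot h(s)$ for any choice of $m,\dot h$. In the example, every pair $(h,m)\in\mathbb{H}\times\m$ produces velocities $\dot h(s)\,(q_se_1+(1-q_s)e_2)$ with $q_s\in[0,1]$. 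These lie in the closed first or third quadrant, so $\tfrac12(e_1-e_2)$ cannot be reproduced and no projection of equal or larger value exists.

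The comparison you need is between the joint-relaxed limiting value and $V^{0,\delta}_{t,\omega}$; the paper states it directly after Corollary~\ref{coro: HD convergence}. It has to be obtained by approximation:
\begin{itemize}
\item Truncate $z$ using the $L^2$ bound.
\item Apply a chattering lemma to get strict co-chattering pairs $(\lambda^k(s),\dot h^k(s))$, i.e.\ $m^k=\delta_{\lambda^k(s)}(d\lambda)\,ds$.
\item These have trajectories converging locally uniformly and energies with limit at most $\tfrac12\int\|z\|^2\,dm^*$.
\end{itemize}
Because $\tau_{D_\delta}$ is only upper semicontinuous, this approximation loses in the wrong direction, so you must also enlarge the domain. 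If $\|Y^k-Y\|_T\le\delta$, then $\tau^t_{D_{2\delta}}(Y^k)\ge\tau^t_{D_\delta}(Y)\wedge T$. With continuity of $g$ at $\infty$, this bounds the joint-relaxed $\delta$-value by $V^{0,2\delta}_{t,\omega}$ plus an error vanishing as $T\to\infty$. Then $(t,\omega)\in\T$ closes the upper bound.
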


The proof of Theorem~\ref{theo: main result} can be found in Section~\ref{sec: proof main} below. Let us briefly discuss the assumption \((t, \omega) \in \T\) from our main result.

\begin{discussion}
    In Theorem~\ref{theo: main result}, we restrict our attention to initial conditions \((t, \omega) \in \T\). This distinction is expected
    in this context; see, e.g., \cite{BD_01}. 
    In general, as \(\tau^t_{D_\delta} \searrow \tau^t_{\overline{D}}\) with \(\delta \searrow 0\), it follows from classical minimax results, see, e.g., \cite[Corollary, p.\ 407]{Ter_73}, that 
    \[
    \lim_{\delta \searrow 0} V^{0, \delta}_{t, \omega} = \overline{V}\lowersup{0}_{t, \omega} := \sup_{\substack {h \in \mathbb{H} \\ m \in \m}} \Big\{ g (\tau_{\overline{D}}^t (Y^{h, m, t, \omega})) - \frac{1}{2} \int_0^\infty \|\dot{h} (s)\|^2 \,ds \Big\}.
    \]
    In case \(\omega (t) \not \in \overline{D}\), we clearly have \(V^0_{t, \omega} = \overline{V}\lowersup{0}_{t, \omega} = g (t)\). Thus, the only interesting case is \(\omega (t) \in \overline{D}\).
    We now discuss some explicit conditions for \((t, \omega) \in \T\), restricting our attention to the Markovian case with \(\mu (s, \omega, \lambda) \equiv \mu (\omega (s), \lambda)\) and \(\sigma (s, \omega, \lambda) \equiv \sigma (\omega (s), \lambda)\). We emphasize that similar conditions can be formulated in a fully path-dependent setting. To simplify our presentation, which is intended as an exposition, we only discuss the Markovian case in detail.

    Assume the following conditions:
    \begin{enumerate}[(a)]
        \item \(D\) has a \(C^{1,1}\)-boundary.
        \item \(g\) is Lipschitz continuous with Lipschitz constant \(L_g > 0\).
        \item \label{item: domination constraint} 
        Let \(d^s (x)\) be the signed distance function to the boundary of $D$, the gradient of which is the inward normal \(n (x)\). For \(\eta > 0\), set 
        \begin{align*}
        &U_\eta := \{ x \in \overline{D} \colon 0 \leq d^s (x) < \eta \}, \quad
        B_\eta := \sup_{\substack{z\in\overline{U}_\eta\\ \lambda\in\Lambda}} \langle\mu(z,\lambda), n(z)\rangle, \quad
        \Sigma_\eta := \sup_{\substack{z\in\overline{U}_\eta\\ \lambda\in\Lambda}} \|\sigma(z,\lambda)\|.
        \end{align*}
        There exists an \(\eta > 0\) such that, for all $x \in \partial D$, $\eta$ is smaller than the injectivity radius of the exponential map of the inward normal $n(x)$,
        \[
        B_\eta < 0, \quad \Sigma_\eta > 0, \quad \frac{B^2_\eta}{2 \Sigma^2_\eta} \geq L_g, \quad \on{osc} (g) \leq \frac{2 \eta | B_\eta |}{\Sigma^2_\eta}, 
        \]
        where \(\on{osc} (g) := \sup g - \inf g\).
    \end{enumerate}
    Providing some intuition, \ref{item: domination constraint} enforces the drift to push any ``admissible'' controlled process instantaneously from the boundary of \(D\) in the direction of \(D^c\), where ``admissibility'' refers to a class of optimal controls. We emphasize that \ref{item: domination constraint} can, for example, be relaxed in the direction of distinguishing between different parts of the boundary, cf. \cite{Lions_83_I}. At this stage we only focus on an exposition of the argument rather than an optimized condition.

    Let \(\eta > 0\) as in \ref{item: domination constraint}, take \(h \in \mathbb{H}\), \(m \in \m\), and \((t, \omega) \in \bR_+ \times \Omega\) such that \(\omega (t) \in D\). We write \(B \equiv B_\eta, \Sigma \equiv \Sigma_\eta\), \(Y \equiv Y^{h, m, t, \omega}\), \(\tau_D \equiv \tau_{D}^t (Y), \tau_{\overline{D}} \equiv \tau^t_{\overline{D}} (Y)\), and set 
    \[
    \rho := \inf \{s \geq \tau_D \colon Y_s \not \in U_\eta \} - \tau_D.
    \] 
    Without loss of generality, assume that \(\tau_D<\infty\).
    By the chain rule, we obtain for every \(t \leq \rho\) that
    \begin{equation} \label{eq: useful}
    \begin{split}
        d^s(Y_{\tau_D+ t})
        &= \int_{\tau_D}^{\tau_D+ t} \int_\Lambda
            \langle \mu(Y_s,\lambda) + \sigma(Y_s,\lambda)\dot{h}(s), n(Y_s)\rangle \, m(ds,d\lambda)
        \\
        &\le B\, t + \Sigma \int_{\tau_D}^{\tau_D+ t} \|\dot{h}(s)\| \, ds .
    \end{split}
    \end{equation} 
    {\em Case 1:} We now investigate the case \(\tau_D < \tau_{\overline{D}}\). 
    Set \(t^* := (\tau_{\overline{D}} - \tau_D) \wedge \varrho\). For all \(t < t^*\), we have \(d^s (Y_{\tau_D + t}) \geq 0\) and hence,
    \[ 
        \Sigma \int_{\tau_D}^{\tau_D+ t} \|\dot{h}(s)\| \, ds \geq - B \, t.
    \]
    Letting \(t \nearrow t^*\), we obtain
    \[
        \int_{\tau_D}^{\tau_{D} + t^*} \|\dot{h}(s)\| \, ds \geq - \frac{B \, t^*}{\Sigma},
    \]
    and consequently, \(t^* < \infty\).

    {\em Case 1.1:} Refining the current case under investigation, suppose that \(\tau_{\overline{D}} - \tau_D \leq \varrho\), i.e., \(t^* = \tau_{\overline{D}} - \tau_D\). Then, we obtain that 
    \begin{align*}
        g(\tau_{\overline{D}}) &- \frac{1}{2}\int_0^{\tau_{\overline{D}}} \|\dot{h}(s)\|^2 \, ds
        \\&\le \Big( g(\tau_D) - \frac{1}{2}\int_0^{\tau_D}\|\dot{h}(s)\|^2\,ds \Big)
            + \big( g(\tau_D+t^*) - g(\tau_D) \big)
            - \frac{1}{2}\int_{\tau_D}^{\tau_D+t^*}\|\dot{h}(s)\|^2\,ds
         \\&\le \Big( g(\tau_D) - \frac{1}{2}\int_0^{\tau_D}\|\dot{h}(s)\|^2\,ds \Big)
            + \big( g(\tau_D+t^*) - g(\tau_D) \big)
            - \frac{1}{2 t^*} \Big(\int_{\tau_D}^{\tau_D+t^*}\|\dot{h}(s)\|\,ds \Big)^2
        \\
        &\le V^0_{t,\omega} + \Big( L_g - \frac{B^2}{2\Sigma^2} \Big)t^* \leq V^0_{t,\omega}.
    \end{align*}
    By hypothesis \ref{item: domination constraint}, we have \(L_g - B^2 / 2 \Sigma^2 \leq 0\) and consequently, 
    \begin{align} \label{eq: main inequality}
g(\tau_{\overline{D}}) - \frac{1}{2}\int_0^{\tau_{\overline{D}}} \|\dot{h}(s)\|^2 \, ds \leq V^0_{t,\omega}.
    \end{align} 

    {\em Case 1.2:} Suppose now that \(\varrho < \tau_{\overline{D}} - \tau_D\), i.e., \(t^* = \varrho\). Then, \(d^s (Y_{\tau_D + t^*}) = \eta\), which, together with \eqref{eq: useful}, yields
    \begin{align*}
        \int_{\tau_D}^{\tau_D + t^*} \| \dot{h} (s) \| \, ds \geq \frac{\eta + |B| t^*}{\Sigma}.
    \end{align*}
    Using the Cauchy--Schwarz inequality, this entails that 
    \[
    \int_{\tau_D}^{\tau_D + t^*} \| \dot{h} (s) \|^2 \, ds \geq \frac{(\eta + |B| t^*)^2}{\Sigma^2 t^*}.
    \]
    Minimizing \(t \mapsto (\eta + |B| t)^2 / \Sigma^2 t\) over \(t \in (0, \infty)\) yields that 
    \[
    \int_{\tau_D}^{\tau_D + t^*} \| \dot{h} (s) \|^2 \, ds \geq \frac{4 \eta |B|}{\Sigma^2}.
    \]
    We conclude that 
    \begin{align*}
        g(\tau_{\overline{D}}) - \frac{1}{2}\int_0^{\tau_{\overline{D}}} \|\dot{h}(s)\|^2 \, ds 
        &\leq V^0_{t, \omega} + \big(g (\tau_{\overline{D}}) - g (\tau_D )\big) - \frac{1}{2} \int_{\tau_D}^{\tau_{\overline{D}}} \| \dot{h}(s)\|^2 \, ds
        \\&\leq V^0_{t, \omega} + \on{osc} (g) - \frac{1}{2} \int_{\tau_D}^{\tau_{D} + t^*} \| \dot{h}(s)\|^2 \, ds
        \\&\leq V^0_{t, \omega} + \on{osc} (g) - \frac{2 \eta |B|}{\Sigma^2} \leq V^0_{t, \omega}, 
    \end{align*}
    where we used \ref{item: domination constraint} for the last inequality. 
    
    In summary, we proved that \eqref{eq: main inequality} holds whenever \(\tau_D < \tau_{\overline{D}}\). 

    \smallskip
    {\em Case 2:} If \(\tau_D = \tau_{\overline{D}}\), then \eqref{eq: main inequality} holds trivially. At this point, we emphasize that \(\tau_D = \infty\) always implies \(\tau_D = \tau_{\overline{D}}\).
    
    Finally, maximizing \eqref{eq: main inequality} over all \(h \in \mathbb{H}\) and \(m \in \m\), we conclude that \(\overline{V}^0_{t, \omega} \leq V^0_{t, \omega}\), which proves that \((t, \omega) \in \T\).
\end{discussion}

We conclude this section with a detailed example, illustrating a possible application of Theorem~\ref{theo: main result}.

\begin{example}\label{ex:mem}
    As an example of the type of setting we have in mind for Theorem \ref{theo: main result}, we consider a one-dimensional controlled SDE with a memory component in the drift, for which we can explicitly identify the small-noise limit as well as the associated optimal controls. We point out that the memory component makes the one-dimensional problem path-dependent.
    
    Let $d = r = 1$, and $D = (-1,1)$.
    For a given \([1, 2]\)-valued progressively measurable process \(\lambda = (\lambda_t)_{t \geq 0}\), we consider the controlled SDE
    \begin{equation}
            dX^\lambda_t = \Big(1+\lambda_t\int_0^t X_s\,ds \Big) \,dt + \sqrt{\varepsilon}\,dW_t,\quad
            X^\lambda_0 = 0,
    \end{equation}
    and reward function $g(t) = t \wedge 1$.
    Note that, in particular, the Lipschitz and growth assumptions on the drift are satisfied as, for any $T>0$, $t \in [0, T]$, $\lambda \in [1,2]$, and $\omega, \omega' \in  \Omega$, we have
    \[
        \bigg|1+\lambda\int_0^t \omega (s)\,ds - 1-\lambda\int_0^t\omega' (s)\,ds\bigg| \leq 2T \|\omega - \omega'\|_t, 
        \qquad 
        \bigg|1+\lambda\int_0^t \omega (s)\,ds\bigg| \leq 2T(1+ \|\omega\|_t).
    \]
    By virtue of Remark~\ref{rem: other control frameworks}, we aim to use Theorem \ref{theo: main result} to show that
    \begin{equation}
        V^\varepsilon := \varepsilon\log \sup_{\lambda \in \cA} E\left[\exp\left(\frac{g(\tau_D (X^\lambda)}{\varepsilon}\right)\right] \xrightarrow{\varepsilon \searrow 0}
        \sup_{\substack{h\in L^2(\bR_+)\\\lambda \in \mathbb{L} }}\left\{(\tau_D(X^{\lambda, h})\wedge 1)-\frac12\int_0^\infty |h_t|^2\, dt\right\} =: V^0,
    \end{equation}
    where $\cA$ denotes the set of progressively measurable controls with values in $[1,2]$, we have \(\mathbb{L} := \{ \lambda \colon \bR_+ \to [1, 2] \text{ measurable}\}\), and $X^{\lambda,h}$ is the solution of the deterministic two-dimensional control system
    \begin{align} \label{eq: linear system}
        \begin{cases} dX^{\lambda,h}_t = ( 1+ \lambda_t\mu^{\lambda,h}_t + h_t ) \, dt, & X^{\lambda,h}_0 = 0,\\
        d\mu^{\lambda,h}_t = X^{\lambda,h}_t \, dt, & \mu^{\lambda,h}_0 = 0.\end{cases} 
    \end{align}
    We now discuss the deterministic limiting control problem.
    Note that, due to the form of the payoff function $g$, all times $t>1$ do not influence the payoff.
    As the Cameron--Martin control in the limiting deterministic problem is costly, it is optimal to not exert any control after time $t=1$ and set $(h(s))_{s>1} \equiv 0$. Thus, we can equivalently optimize over $h \in L^2([0,1])$.

    Solving the system \eqref{eq: linear system} by variation of constants for a constant $\lambda \in [1,2]$, we find that, for any $h \in L^2([0,1])$,
    \begin{equation} \label{eq:example_explicit_X}
        X^{\lambda, h}_t = \lambda^{-1/2}\sinh(\sqrt{\lambda}t)+\int_0^t G^\lambda_t(s)h_s \,ds,
    \end{equation}
    where $G^\lambda_t(s) = \cosh(\sqrt{\lambda}(t-s))$ is the Green function associated with the controlled deterministic state $X^{\lambda, h}_t$.
    In particular, for the uncontrolled system, i.e., $h \equiv 0$, we have $X^{\lambda,0}_t = \lambda^{-1/2} \sinh(\sqrt{\lambda}t)$ and, as $\lambda \in [1,2]$,
    \[
    \tau^{\lambda, 0}_{D} : = \inf \{ s \geq 0 \colon X^{\lambda, 0}_s \not \in D \} = \lambda^{-1/2}\sinh^{-1}(\sqrt{\lambda}) \leq \tau^{1,0}_{D} = \sinh^{-1}(1) < 1,
    \]
    which is a lower bound for $V^0$.

    As \(L^2([0, 1])\) is a Hilbert space, for any $\lambda \in [1,2]$, $\theta \in (0,1]$, and $x \in \bR$, by projecting $0$ onto the affine hyperplane $\langle G^\lambda_\theta, h \rangle + \lambda^{-1/2} \sinh(\sqrt{\lambda}\theta) - x$, there exists a unique $h^{\lambda,\theta,x} \in L^2([0,1])$ minimizing $\tfrac12\|h\|^2_{L^2([0,1])}$ such that $X^{h^{\lambda,\theta,x}}_\theta = x$, which is explicitly given by
    \begin{equation}
        h^{\lambda, \theta,x}_s = 
        \begin{cases}
            \frac{x- \lambda^{-1/2}\sinh(\sqrt{\lambda}\theta)}{\| G^\lambda_\theta \|^2_{L^2([0,\theta])}}\cosh(\sqrt{\lambda}(\theta-s)), & 0<s<\theta,\\
            0,& \theta\leq s,
        \end{cases}
    \end{equation}
    where $\| G^\lambda_\theta \|^2_{L^2([0,\theta])}=\frac12\big(\theta+\frac{\sinh(2\sqrt{\lambda}\theta)}{2\sqrt{\lambda}}\big)$.
    Note that the total energy of the optimal control is given by
    \begin{equation}
        \tfrac12\|h^{\lambda,\theta,x}\|^2_{L^2([0,1])} = \frac12\int_0^1 |h^{\lambda,\theta,x}_t|^2\, dt = \frac{(x - \lambda^{-1/2}\sinh(\sqrt{\lambda}\theta))^2}{\theta+\tfrac12\lambda^{-1/2}\sinh(2\sqrt{\lambda}\theta)}.
    \end{equation}

    We first argue that it is sufficient to consider $\lambda \equiv 1$ and that the optimizing problem can be reduced to
    \begin{equation}\label{eq:example:intermediate_V0}
        V^0= \tau_D^{1,0} \vee \sup_{\substack{x \in \overline{D}\\\theta > 0}}\left\{(\tau_D(X^{h^{\theta, x}})\wedge 1)-\frac{(x - \sinh(\theta))^2}{\theta+\tfrac12\sinh(2\theta)}\right\},
    \end{equation}
    where $h^{\theta, x} \coloneqq h^{1, \theta, x}$.
    As $\lambda \equiv 1 \in \cA$, the RHS of equation \eqref{eq:example:intermediate_V0} is a lower bound. For the reverse inequality, consider that the total cost of the Cameron--Martin control for fixed $\theta \in (0,1]$ and $x \in \overline{D}$ is strictly increasing in $\lambda$. Approximating controls in $\cA$ by piecewise constant controls, we find that, for any fixed $\theta \in (0,1]$ and $x \in \overline{D}$, the control $\lambda \equiv 1$ generates an upper bound.
    Intuitively, the larger $\lambda$ is chosen, the more the Cameron--Martin control needs to work against the memory of the drift, which is costly. As choosing the control $\lambda$ does not incur any costs, it is optimal to choose it such that the accumulation of memory is minimal, i.e., one chooses $\lambda \equiv 1$.
    Consequently, we can reduce the optimizing problem to the one in equation \eqref{eq:example:intermediate_V0} and reduce the notation accordingly.

    We next work towards showing that the $x$-optimizer of the above equation is $+1$ and start by showing that any $x$-optimizer needs to lie on the boundary of $D$, i.e., $x^* \in \{-1,+1\}$.
    Indeed, assume that $h' \in L^2([0,1])$ is such that $\tau_D (X^{h'})>1$, which implies that $-1<X^{h'}_1 < 1$. As the norm of the optimal control $\tfrac12\|h^{1,x}\|^2_{L^2([0,1])}$ is decreasing in $x$ on $(-\infty, \sinh(1)]$ and the payoff for $X^{h'}$ is $1$, we find that
    \[
        (\tau_D(X^{h'})\wedge 1) - \tfrac12\|h'\|^2_{L^2([0,1])} < (\tau_D(X^{h^{1,1}})\wedge 1) - \tfrac12\|h^{1,1}\|^2_{L^2([0,1])}.
    \]
    Moreover, note that, for any $\theta \in (0,1]$, we have
    \[
    \tfrac12\|h^{\theta,1}\|^2_{L^2([0,1])} \leq \tfrac12\|h^{\theta,-1}\|^2_{L^2([0,1])},
    \]
    i.e., controlling into $+1$ is always cheaper than into $-1$, while the payoff is the same. 
    Lastly, it is easy to verify that using $h^{\theta,1}$ as a control, $X^{h^{\theta,1}}$ does not exhibit `overshooting behavior', i.e., for all $t \in [0,\theta)$, we have that $0\leq X^{h^{\theta,1}}_t <1$. Consequently, the $x$-optimizer in \eqref{eq:example:intermediate_V0} is given by $x^*=1$.
    Thus, we can reduce the optimization problem in equation \eqref{eq:example:intermediate_V0} further to a one-dimensional optimization problem and get
    \begin{equation}
        V^0 = \tau_D^{1,0} \vee\sup_{\theta>0} \bigg\{ (\theta\wedge1) - \frac{(1 - \sinh(\theta))^2}{\theta+\tfrac12\sinh(2\theta)}\bigg\}.
    \end{equation}
    
    Straightforward calculations now show that the supremum is attained at $\theta^*=1$ and greater than $\tau_D^{1,0}$, such that we find 
    \[
        V^0 = 1- \frac{(\sinh(1)-1)^2}{1 + \tfrac12 \sinh(2)}.
    \]

    To apply Theorem \ref{theo: main result} in this setting, it remains to verify that $(0,0) \in \T$. To this end, let $\delta> 0$ and consider the $\delta$-blow-up of $D$,
    \[
        D_\delta = [-(1+\delta), (1+\delta)].
    \]
    The aim now is to show that
    \begin{equation}
        V^{0,\delta} := \sup_{h\in L^2([0,1])}\left\{(\tau_{D_\delta}(X^h)\wedge 1)-\frac12\int_0^1 |h_t|^2\, dt\right\} \xrightarrow{\delta\searrow0} V^0.
    \end{equation}

    Repeating the arguments from above for $\delta< \sinh(1)-1$, we find that the optimal value is given by
    \[
        V^{0,\delta} = 1- \frac{(\sinh(1)-(1+\delta))^2}{1 + \tfrac12 \sinh(2)} \to V^0
    \]
    as $\delta\searrow0$. Consequently, \((0, 0) \in \T\).
\end{example}

The remainder of this paper is dedicated to the proof of our main result, Theorem~\ref{theo: main result}. Its proof is split into several steps that are detailed in the following sections. To give a short outline, in Theorem~\ref{theo: VF}, we establish a variational formula and, throughout Section~\ref{sec: convergence relaxed}, investigate convergence properties of associated relaxed control rules. The proof of Theorem~\ref{theo: main result} is then given in Section~\ref{sec: proof main}.

\section{A Variational Formula for Log-Transformed Control Problems} \label{sec: variational-formula}

In this section, we derive a variational formula for the entropic transformation \(\log \cE^\varepsilon (e^{\varphi})\) that is the key mathematical tool for our proof of Theorem~\ref{theo: main result}. 
Without loss of generality, we only consider the case \(\varepsilon = 1\), writing \(\cE \equiv \cE^1\) and
\[
\cE^*_t (\varphi) (\omega) := \log \cE_t( e^\varphi ) (\omega)
\]
for all \((t, \omega) \in \bR_+ \times \Omega\) and all upper semianalytic functions \(\varphi \colon \Omega \to \bR\).
The variational representation for \(\cE^*\) is based on the following optimal control problem:
Let \(\m^*\) be the set of all Radon measures on \(\bR_+ \times \Lambda \times \bR^r\), whose projections to \(\bR_+\) coincide with the Lebesgue measure. 
We endow \(\m^*\) with the vague topology, which turns it into a Polish space. 
The coordinate map on \(\m^*\) is denoted by \(M^*\).
Similar to the definition of the set \(\cK\), for \((t, \omega) \in \bR_+ \times \Omega\), let \(\cR (t, \omega)\) be the set of all relaxed control rules, with initial values \((t, \omega)\), 
associated to the operator
\begin{align*} 
 	\widetilde{L} (x, s, \omega, \lambda, z, \varphi) := \langle \mu (s, \omega, \lambda) + \sigma (s, \omega, \lambda) z, \nabla \varphi (x) \rangle + \tfrac{1}{2} \on{tr} \big[ \sigma (s, \omega, \lambda) \sigma^* (s, \omega, \lambda) \, \nabla^2 \varphi (x) \big].
 \end{align*}
 The elements of \(\cR (t, \omega)\) are probability measures on \((\Omega \times \m^*, \cF \otimes \M^*)\).
The control problem associated to these control rules is given by
\[
\cE'_t (\varphi) (\omega) := \sup_{P \in \cR (t, \omega)} E^P \Big[ \varphi (\X) - \frac{1}{2} \int_0^\infty \int_{\Lambda \times \bR^r} \| z \|^2 \, M^* (dt, d\lambda, dz) \Big],
\]
where \(\varphi \colon \Omega \to \bR\) is an upper semianalytic function. Let \(\USA_b (\Omega; \bR)\) denote the set of all {\em bounded} upper semianalytic functions.

\begin{theorem} \label{theo: VF} Let Assumptions~\ref{SA: control}~\ref{item:SA:continuity in control}--\ref{item:SA:growth} hold.
    For all \(\varphi \in \USA_b (\Omega; \bR)\) and \((t, \omega) \in \bR_+ \times \Omega\), we have 
\begin{align} \label{eq: main VF}
\cE^*_t (\varphi) (\omega) = \cE'_t (\varphi) (\omega).
\end{align} 
\end{theorem}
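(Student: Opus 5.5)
Both sides are nonlinear expectations: $\cE^*$ is the log-transform of the sublinear expectation $\cE$, and $\cE'$ is a control problem with quadratic cost. The plan is to show that both families $(\cE^*_t)_t$ and $(\cE'_t)_t$ are (dynamically consistent) convex expectations on path space and that they share the same generator, i.e.\ the same path-dependent HJB operator. We then identify them via the comparison principle for convex expectations from \cite{CK_25}, using the PPDE theory of \cite{Z_23} to establish the needed viscosity properties.

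\textbf{Step 1: structural properties.} For both families we first record the dynamic programming principle (tower property) $\cE^*_s(\varphi) = \cE^*_s(\cE^*_t(\varphi))$ for $s\le t$, and likewise for $\cE'$. For $\cE$ this is standard for relaxed control rules via measurable selection, using stability of $\cK$ under conditioning and concatenation. It transfers to $\cE^*$ because $\log$ is monotone. For $\cE'$ the same measurable-selection argument applies to $\cR$, with the running cost $\frac12\int\|z\|^2\,dM^*$ being additive in time.

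We then check the remaining properties:
\begin{itemize}
\item \emph{Constant-preservation:} $\cE^*(\varphi+c)=\cE^*(\varphi)+c$.
\item \emph{Monotonicity.}
\item \emph{Convexity of $\cE^*$:} this follows from Hölder's inequality, $\log E[e^{\alpha\varphi+(1-\alpha)\psi}]\le \alpha\log E[e^\varphi]+(1-\alpha)\log E[e^\psi]$, applied for each $P$ before taking the supremum.
\end{itemize}
For $\cE'$, convexity comes from convexity of $\cR$ together with the concave cost. Linear growth and the Lipschitz conditions give uniform moment bounds, hence the tightness and continuity-from-above properties on the space $\USA_b$ required in \cite{CK_25}.

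\textbf{Step 2: the generators.} For a smooth cylinder or $C^{1,2}$-path functional $\phi$ (in the Dupire sense), Itô's formula under $P\in\cK$ applied to $e^{\phi}$ gives the small-time expansion
\[
\tfrac{1}{h}\big(\cE^*_t(\phi_{t+h})-\phi_t\big)\to \partial_t\phi+\sup_{\lambda}\Big\{\langle\mu,\nabla\phi\rangle+\tfrac12\operatorname{tr}[\sigma\sigma^*(\nabla^2\phi+\nabla\phi\nabla\phi^*)]\Big\}.
\]
Under $P\in\cR$ the analogous computation gives
\[
\partial_t\phi+\sup_{\lambda,z}\Big\{\langle\mu+\sigma z,\nabla\phi\rangle+\tfrac12\operatorname{tr}[\sigma\sigma^*\nabla^2\phi]-\tfrac12\|z\|^2\Big\}.
\]
Maximizing over $z$ yields $z=\sigma^*\nabla\phi$ and the term $\frac12\|\sigma^*\nabla\phi\|^2=\frac12\operatorname{tr}[\sigma\sigma^*\nabla\phi\nabla\phi^*]$, so the two Hamiltonians coincide. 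This is the classical Fleming logarithmic transform. One caveat must be handled: the optimal $z$ is unbounded, while $\cR$ allows all $z\in\bR^r$. We therefore need localization to ensure the supremum in $z$ is attained on compacts uniformly along stopping times, using the bounded gradient of the test function on a localized set.

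\textbf{Step 3: identification and the main obstacle.} With Steps 1--2, we show that for bounded uniformly continuous $\varphi$, $(t,\omega)\mapsto \cE^*_t(\varphi)(\omega)$ and $\cE'_t(\varphi)(\omega)$ are viscosity solutions, in the sense of \cite{Z_23}, of the same semilinear PPDE with terminal data $\varphi$. This works either directly through the comparison principle of \cite{CK_25} for convex expectations with equal generators, or through PPDE uniqueness. We then extend to all of $\USA_b$ by a monotone class / capacity argument. First, we approximate upper semicontinuous $\varphi$ from above by continuous functions and use continuity from above of both sides, which holds by tightness and compactness of $\cK$ and of $\cR$ restricted to bounded-energy controls. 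Second, we pass to Borel and upper semianalytic functions using the representation of both sides as suprema over measures and regularity of these capacities.

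I expect the main obstacle to be this last extension, together with the unboundedness of $z$ in Step 2. $\cR$ is not compact, since $\m^*$ is only Polish. This must be resolved by truncating to $\int\|z\|^2\le K$, which is justified because $\varphi$ is bounded: controls with cost exceeding $2\|\varphi\|_\infty$ are suboptimal. One then checks that the truncated problems still satisfy the comparison hypotheses.
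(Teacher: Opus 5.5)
Your overall architecture matches the paper's: both sides are convex expectations, \cite{CK_25} reduces the identity to bounded Lipschitz inputs with a finite horizon, and PPDE theory identifies the two sides on those. Your hands-on extension step (USC approximation from above, continuity from above via compactness of $\cK(t,\omega)$ and of the energy-truncated sets $\cR_M(t,\omega)$, then inner regularity) is a reasonable substitute for the paper's direct appeal to \cite[Theorem~2.12]{CK_25}.

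The gap is in the identification step. You plan to show that $\cE^*_\cdot(\psi)$ and $\cE'_\cdot(\psi)$ both solve the semilinear PPDE with Hamiltonian $\widetilde{G}$. You then conclude either from a generator-based comparison for convex expectations or from PPDE uniqueness, and neither is available. The comparison theorem of \cite{CK_25} used in the paper only traces the problem back to finite-dimensional distributions, i.e.\ to finite-horizon Lipschitz inputs. In a non-Markovian setting it does not identify two convex expectations from a formal generator; supplying exactly that identification is the job of the PPDE argument. The comparison principle of \cite{Z_23} covers the $G$-equation, whose Hamiltonian is a supremum over the compact set $\Lambda$ of linear operators with Lipschitz, linearly growing coefficients. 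It does not cover the $\widetilde{G}$-equation. There the supremum runs over unbounded $h\in\bR^r$, so the Hamiltonian contains $\frac12\|\sigma^*\nabla\phi\|^2$, which is quadratic in the gradient with a coefficient growing like $(1+\|\omega\|_t)^2$. Truncating to $E^P[\int\|z\|^2\,dM^*]\le M$, as you suggest, gives compactness but does not bound $z$ pointwise, so the resulting Hamiltonian is still $\widetilde{G}$.

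The missing idea is to apply the logarithmic (Hopf--Cole) transform to the viscosity solution itself, not only to the Hamiltonians, so that uniqueness is invoked only for the $G$-equation. Set $\tilde v := \cE'_\cdot(\psi)$ and prove that $e^{\tilde v}$ is a bounded $\mathsf{d}$-Lipschitz viscosity solution of the $G$-equation with terminal value $e^{\psi}$. A $G$-test function $\phi$ touching $e^{\tilde v}$ is turned into the $\widetilde{G}$-test function $\log\phi$ for $\tilde v$. Your own $z$-maximization then gives $\partial_t\log\phi+\widetilde{G}(t,\omega,\nabla\log\phi,\nabla^2\log\phi)=\big(\partial_t\phi+G(t,\omega,\nabla\phi,\nabla^2\phi)\big)/\phi$. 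For subsolutions, $\phi\ge e^{\tilde v}\ge e^{-\|\psi\|_\infty}$, so $\log\phi\in C^{1,2}_{pol}$. For supersolutions, $\phi$ lies below $e^{\tilde v}$ and may fail to be positive, so you need a cut-off of the test function. Since $\cE_\cdot(e^{\psi})$ is the unique bounded $\mathsf{d}$-Lipschitz solution of the $G$-equation, this yields $e^{\tilde v}=\cE_\cdot(e^{\psi})$. On this route you never need to show that $\cE^*$ solves the $\widetilde{G}$-equation. Also note that uniqueness in \cite{Z_23} holds only within $\mathsf{d}$-Lipschitz functions, so your inputs must be Lipschitz with a finite horizon, not merely uniformly continuous.
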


\begin{proof}
   The statement follows from Lemmas~\ref{lem: UC reduction VF} and \ref{lem: VF on lip} below. 
\end{proof}

Relating Theorem \ref{theo: VF} to existing literature, it extends \cite[Theorem~5.2]{CK_25} from a Markovian to a path-dependent setting. Furthermore, it can be viewed as an extension of the celebrated Bou\'e--Dupuis formula \cite{BD_98} to a controlled SDE setting with path-dependent coefficients.

\subsection{Reduction to \texorpdfstring{\(\Lip_b\)}{bounded Lipschitz} functions}

The first step in our proof for Theorem~\ref{theo: VF} is the reduction from general upper semianalytic to Lipschitz continuous input functions. 

\begin{lemma} \label{lem: UC reduction VF}
Let Assumptions~\ref{SA: control}~\ref{item:SA:continuity in control}--\ref{item:SA:growth} hold.
Then, the variational representation \eqref{eq: main VF} holds for all \(\varphi \in \USA_b (\Omega; \bR)\) if and only if it holds for all \(\varphi \in \Lip_b
(\Omega; \bR)\) satisfying \(\varphi (\omega) = \varphi (\omega (\, \cdot \wedge T))\) for some \(T = T_\varphi > 0\).
\end{lemma}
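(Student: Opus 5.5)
The nontrivial direction is ``if''; the converse is immediate since such Lipschitz functions are bounded Borel, hence upper semianalytic. My plan is to show that both sides of \eqref{eq: main VF}, namely \(\varphi \mapsto \cE^*_t(\varphi)(\omega) = \log \cE_t(e^\varphi)(\omega)\) and \(\varphi \mapsto \cE'_t(\varphi)(\omega)\), enjoy enough continuity along monotone sequences to pass from the class \(\mathcal{L}\) of bounded Lipschitz, finitely-determined functions to all of \(\USA_b(\Omega;\bR)\). I will do this in two steps: first extend to bounded upper semicontinuous functions by decreasing limits, then to upper semianalytic functions by a capacitability argument.

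\textbf{Step 1: upper semicontinuous functions.} Take \(\varphi\) bounded and upper semicontinuous on \(\Omega\). It is the decreasing limit of bounded Lipschitz functions \(\varphi_k\), for instance via inf-convolution with respect to a metric for the local uniform topology. To obtain finitely-determined approximants, replace \(\varphi_k\) by \(\varphi_k(\omega(\cdot\wedge T_k))\) with \(T_k \to \infty\), adding a correction \(\to 0\) to preserve monotonicity. The metric \(\sum_n 2^{-n}(1\wedge\|\cdot\|_n)\) makes this error uniformly small.

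For the downward continuity of both sides I will use compactness of the sets \(\cK(t,\omega)\) and of suitable sublevel sets of \(\cR(t,\omega)\). The set \(\cK(t,\omega)\) is compact in the weak topology on \(\Omega\times\m\). This follows from the linear growth and Lipschitz conditions in Assumption~\ref{SA: control}, moment bounds and Kolmogorov tightness, compactness of \(\m\), and closedness of the martingale property under weak limits, which uses the continuity in \((t,\lambda)\) from Assumption~\ref{SA: control}~\ref{item:SA:continuity in control}.

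For \(\cR(t,\omega)\) the \(z\)-component is unbounded. However, since \(\varphi\) is bounded, it suffices to restrict to rules with \(E^P[\frac12\int\|z\|^2 dM^*]\le 2\|\varphi\|_\infty+1\). On this set the \(L^2\) energy bound gives tightness of \(M^*\) in the vague topology and uniform integrability of the drift term \(\sigma z\). The energy functional is lower semicontinuous, so the set is compact.

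With compactness in hand, a standard Dini/minimax argument gives \(\sup_P E^P[\varphi_k]\downarrow \sup_P E^P[\varphi]\) for decreasing upper semicontinuous \(\varphi_k\). The same holds for \(\cE'\), because the objective \(P\mapsto E^P[\varphi_k - \frac12\int\|z\|^2]\) is upper semicontinuous and decreasing in \(k\). So \eqref{eq: main VF} holds for bounded upper semicontinuous \(\varphi\).

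\textbf{Step 2: upper semianalytic functions.} This is the main obstacle. For \(\cE_t\) I would show that \(\varphi\mapsto\cE_t(e^\varphi)(\omega)\) is a regular upper capacity: it is monotone, continuous along increasing sequences, and continuous along decreasing sequences of upper semicontinuous functions. Continuity along increasing sequences holds for any sup of linear expectations. Choquet's capacitability theorem, in the functional form of Dellacherie--Meyer applied to upper semianalytic functions, then gives
\[
\cE_t(e^\varphi)(\omega)=\sup\{\cE_t(e^\psi)(\omega)\colon \psi\le\varphi,\ \psi \text{ u.s.c.}\}.
\]
The same holds for \(\cE'_t\): it is a sup over \(P\) of \(E^P[\varphi]-\)const, hence increasing-continuous and u.s.c.-downward continuous by Step 1. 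Applying capacitability to both sides and using equality on upper semicontinuous \(\psi\) yields \eqref{eq: main VF} for all of \(\USA_b(\Omega;\bR)\).

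If capacitability proves awkward, I would fall back on approximating \(\varphi\) from below in \(P\)-measure by compact-set indicators for each fixed \(P\). Inner regularity then gives \(E^P[\varphi]=\sup_{\psi\le\varphi\text{ u.s.c.}}E^P[\psi]\) for each single measure, and taking suprema over \(P\) on both sides yields the same conclusion without a capacity theorem.
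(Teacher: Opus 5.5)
Your proof is correct, but it follows a genuinely different route from the paper.

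\textbf{The paper's route.} The paper proves the lemma abstractly. Citing the proofs of \cite[Theorems~2.21, 5.2]{CK_25}, it asserts that $\cE^*$ and $\cE'$ are convex expectations on path space in the sense of \cite[Definition~2.1]{CK_25}. It then applies the comparison theorem \cite[Theorem~2.12]{CK_25}, together with \cite[Remark~2.11]{CK_25}. That theorem uses the dynamic (time-consistent) structure of these families to trace equality back to finite-horizon, finite-dimensional data.

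\textbf{Your route.} You argue directly and pointwise in $(t,\omega)$, using only that both sides are suprema over explicit sets of measures. The ingredients are:
\begin{enumerate}[(i)]
\item decreasing approximation of bounded u.s.c.\ functions by finite-horizon Lipschitz functions;
\item compactness of $\cK(t,\omega)$ and of the energy-sublevel sets $\cR_M(t,\omega)$, combined with a Dini-type argument (the paper itself relies on the latter compactness in the proofs of Lemma~\ref{lem: value solution} and Proposition~\ref{prop: 2 step to Hausdorff});
\item inner regularity of each measure.
\end{enumerate}

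\textbf{What each buys.} Your argument is self-contained. It needs neither a dynamic programming principle nor measurability of $(t,\omega)\mapsto\cE_t(\varphi)(\omega)$, and it yields the slightly stronger pointwise equivalence. The paper's argument is a two-line appeal to a general comparison principle, but it delegates to \cite{CK_25} the verification of the axioms of a convex expectation on path space, including time consistency.

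\textbf{A simplification for Step~2.} Your ``fallback'' is already a complete argument and should be the main one. Both $\cE_t(e^{\varphi})(\omega)$ and $\cE'_t(\varphi)(\omega)$ are suprema over $P$ of $E^P[\,\cdot\,]$, up to $P$-dependent constants. So inner regularity of each single $P$, followed by an interchange of the two suprema, gives the supremum over u.s.c.\ $\psi\le\varphi$ on both sides at once. Choquet capacitability is therefore superfluous. Invoking it would also require extending both functionals to non-measurable functions via outer integrals, to secure continuity along arbitrary increasing sequences, and your sketch skips that point.
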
 

\begin{proof}
It follows precisely as in the proofs of \cite[Theorems~2.21, 5.2]{CK_25} that \(\cE^*\) and \(\cE'\)
are convex expectations on the path space as defined in \cite[Definition~2.1]{CK_25}. Equipped with this observation, the claimed equivalence follows directly from the comparison \cite[Theorem~2.12]{CK_25}, taking also \cite[Remark~2.11]{CK_25} into consideration.
\end{proof}

\begin{remark}
We emphasize that Lemma~\ref{lem: UC reduction VF} reduces the identification of a variational formula for input functions depending on the {\em infinite} time horizon to the identification for functions with a {\em finite} time horizon. This reduction is made possible by the comparison result from \cite{CK_25} that traces the problem to finite-dimensional distributions, which are intrinsically defined on finite, but arbitrary, time intervals. 
\end{remark}

\subsection{Identification via PPDE techniques}
In view of Lemma~\ref{lem: UC reduction VF}, Theorem~\ref{theo: VF} follows once we establish the variational formula \eqref{eq: main VF} for all bounded Lipschitz continuous input functions \(\varphi\). 
In this section, we establish this result using recent advances in the theory of PPDEs. We first introduce a suitable solution concept. To reduce the technical flavor of this section, we delegated the definition of the set \(C^{1,2}_{pol}\) of test functions and the concepts of horizontal and vertical derivatives, gradients, and Hessians to Appendix~\ref{app: test}. Throughout, let \(T > 0\) be a finite time horizon and recall the definition of a stochastic interval \(\of 0, \tau \gs := \{ (t, \omega) \in \bR_+ \times \Omega : 0 \leq t \leq \tau (\omega) \}\) for a stopping time \(\tau \colon \Omega \to [0, \infty]\). Open or half-open stochastic intervals are defined analogously. 
For \((t, \omega,\phi) \in \of 0, T \gs \times C^{1,2}(\of 0, T \gs ; \bR ) \), we define
\begin{multline*}
	G(t, \omega, \nabla \phi(t,\omega), \nabla^2 \phi(t,\omega)) := \sup \Big\{ \langle \mu (t, \omega, \lambda), \nabla \phi (t, \omega) \rangle\\
	+ \tfrac{1}{2} \on{tr} \big[ \sigma (t, \omega, \lambda) \sigma^* (t, \omega, \lambda) \, \nabla^2\phi (t, \omega) \big]\colon \lambda \in \Lambda \Big\},
\end{multline*}
and 
\begin{multline*}
	\widetilde{G}(t, \omega, \nabla \phi(t,\omega), \nabla^2 \phi(t,\omega)) := \sup \Big\{ \langle \mu (t, \omega, \lambda) + \sigma (t, \omega, \lambda) h, \nabla \phi (t, \omega) \rangle
	\\+ \tfrac{1}{2} \on{tr} \big[ \sigma (t, \omega, \lambda) \sigma^* (t, \omega, \lambda) \, \nabla^2\phi (t, \omega) \big] - \tfrac{1}{2} \| h \|^2 \colon \lambda \in \Lambda, h \in \mathbb{R}^r \Big\}.
\end{multline*}
For either \(H = G\) or \(H = \widetilde{G}\), we consider the following backward PPDE, which we call \(H\)-backward equation with terminal value \(\psi\), 
\begin{equation} \label{eq: PIDE}
	\begin{cases}   
		\p f (t, \omega) + H (t, \omega, \nabla f(t,\omega), \nabla^2 f(t,\omega)) = 0, & \text{for } (t, \omega) \in[0, T) \times \Omega , \\
		f (T, \omega) = \psi (\omega), & \text{for } \omega \in \Omega,
	\end{cases}
\end{equation}
where \(\psi \in C_b (\Omega; \bR)\) satisfies \(\psi (\omega) = \psi (\omega (\cdot \wedge T))\).

As one cannot expect to solve the equations above in the classical sense, we need to define an appropriate weaker solution concept. A prominent concept considered in the literature, see, e.g., \cite{cosso_russo_22,Z_23}, are Crandall--Lions-type viscosity solutions, the definition of which is the following.

\begin{definition}\label{definition:vis_sol}
    Consider the backward PPDE in equation \eqref{eq: PIDE}.
    \begin{enumerate}[(a)]
        \item We say that a continuous function \(u \colon \of 0, T\gs \to \urange\) is a {\em viscosity subsolution} to \eqref{eq: PIDE} if \(u(T, \cdot) \leq \psi\) and, for any \((t,\omega) \in \of 0, T \of \) and \( \phi \in C^{1,2}_{pol}(\of t, T \gs ; \urange )\) satisfying
    	\begin{equation}
    	    0 = (u-\phi)(t,\omega) = \sup \{ (u-\phi)(s,\omega') \colon (s,\omega') \in \of t, T \gs  \},
    	\end{equation}
    	we have
    	\(
    		\p \phi (t, \omega) + H (t, \omega, \nabla \phi(t,\omega), \nabla^2 \phi (t,\omega)) \geq 0.
    	\)

        \item We say that a continuous function \(u \colon \of 0, T\gs \to \urange\) is a \emph{viscosity supersolution} to \eqref{eq: PIDE} if \(u(T, \cdot) \geq \psi\) and, for any \((t,\omega) \in \of 0, T \of \) and \( \phi \in C^{1,2}_{pol}(\of t, T \gs ; \urange )\) satisfying
    	\begin{equation}
    	    0 = (u-\phi)(t,\omega) = \inf \{ (u-\phi)(s,\omega') \colon (s,\omega') \in \of t, T \gs  \},
    	\end{equation}
    	we have
    	\(
    		\p \phi (t, \omega) + H (t, \omega, \nabla \phi(t,\omega), \nabla^2 \phi (t,\omega)) \leq 0.
    	\)

        \item We say that a continuous function \(u \colon \of 0, T\gs \to \urange\) is a \emph{viscosity solution} to \eqref{eq: PIDE} if it is both a viscosity sub- and supersolution.
    \end{enumerate}
\end{definition}

For \((t, \omega), (t', \omega') \in \of 0, T\gs\), we define 
\begin{align} \label{eq: def d metric}
\mathsf{d} ( (t, \omega); (t', \omega') ) := \big( 1 + \| \omega \|_t + \| \omega'\|_{t'} \big) |t - t'|^{\frac{1}{2}} + \| \omega (\, \cdot  \wedge t) - \omega' (\, \cdot \wedge t') \|_T. 
\end{align} 
We say that a function \(v \colon \of 0, T \gs \to \bR\) is \(\mathsf{d}\)-Lipschitz continuous if there exists a constant \(L > 0\) such that 
\[
| v (t, \omega) - v (t', \omega') | \leq L \mathsf{d} ((t, \omega); (t', \omega') )
\]
for all \((t, \omega), (t', \omega') \in \of 0, T \gs\).

\begin{remark}\label{remark:zhou_comparison}
    Note that given Assumption \ref{SA: control} and, in particular, the Lipschitz and growth assumptions in Parts \ref{item:SA:Lipschitz} and \ref{item:SA:growth}, \cite[Theorem 6.1]{Z_23} provides a comparison principle for viscosity solutions to the \(G\)-backward equation with terminal value \(\psi\) in equation \eqref{eq: PIDE}. This in turn implies uniqueness of viscosity solutions in the class of \(\mathsf{d}\)-Lipschitz continuous functions with linear growth, cf.\ \cite[Theorem 6.2]{Z_23}.
\end{remark}

\begin{lemma} \label{lem: value solution} 
Let Assumptions~\ref{SA: control}~\ref{item:SA:continuity in control}--\ref{item:SA:growth} hold and let \(\psi \in \Lip_b (\Omega; \bR)\). 
\begin{enumerate}[(a)]
\item\label{item:value_solution:lhs} The function \(u := \cE_t (\psi) (\omega)\), \((t, \omega) \in \of 0, T \gs\), is the unique bounded \(\mathsf{d}\)-Lipschitz continuous viscosity solution to the \(G\)-backward equation with terminal value \(\psi\).

\item\label{item:value_solution:rhs} The function 
\[
\tilde{v} (t, \omega) := \sup_{P \in \cR (t, \omega)} E^P \Big[ \psi (\X) - \frac{1}{2} \int_0^\infty \int_{\Lambda \times \bR^r} \| z \|^2 \, M^* (dt, d\lambda, dz) \Big], \quad (t, \omega) \in \of 0, T \gs, 
\]
is a bounded \(\mathsf{d}\)-Lipschitz continuous viscosity solution to the \(\widetilde{G}\)-backward equation with terminal value \(\psi\). 
\end{enumerate} 
\end{lemma}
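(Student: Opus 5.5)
The plan is to prove both parts by the same three steps: boundedness, $\mathsf{d}$-Lipschitz continuity, and the viscosity property derived from a dynamic programming principle (DPP). For part \ref{item:value_solution:lhs}, uniqueness then follows from Remark~\ref{remark:zhou_comparison}.

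\emph{Boundedness.} This is immediate for $u$, since $|u|\le \|\psi\|_\infty$. For $\tilde v$, the choice of controls with $z\equiv 0$ (the rules of $\cK(t,\omega)$ lifted to $\m^*$ with Dirac mass at $z=0$) gives $\tilde v\ge -\|\psi\|_\infty$. Since the quadratic cost is nonnegative, we also get $\tilde v\le \|\psi\|_\infty$. Moreover, in the supremum defining $\tilde v$ we may restrict to rules with $E^P[\int\|z\|^2 M^*]\le 4\|\psi\|_\infty$, because rules with larger cost cannot beat the value $-\|\psi\|_\infty$. This a priori energy bound is what makes the unbounded drift $\sigma z$ tractable below.

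\emph{Lipschitz continuity.} For the $\omega$-dependence at fixed $t$, I would couple rules. Given $P\in\cK(t,\omega)$ (resp.\ $\cR(t,\omega)$), I would represent it on an extended space via a martingale measure, driving $dX=\int(\mu+\sigma z)\,M^*+\int\sigma\, dW^{M}$ in the sense of El Karoui--M\'el\'eard. I would then solve the same equation with the same control and noise from initial path $\omega'$, which yields a rule in $\cR(t,\omega')$. The Lipschitz assumption (C), Burkholder--Davis--Gundy and Gronwall give $E\|X-X'\|_T\le C\|\omega-\omega'\|_t$. For the term $\sigma z$ one uses $\|\sigma(\cdot,X)-\sigma(\cdot,X')\|\,\|z\|$, Cauchy--Schwarz and the energy bound, so Gronwall still works with a constant depending on $\|\psi\|_\infty$. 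Since $\psi$ is Lipschitz and the costs coincide, swapping the roles of $\omega$ and $\omega'$ gives Lipschitz dependence in $\omega$. For the time-dependence, from $t$ to $t'>t$ I would combine the DPP with the estimate $E\|X_{\cdot\wedge t'}-\omega(\cdot\wedge t)\|\le C(1+\|\omega\|_t)|t'-t|^{1/2}$. This estimate comes from linear growth (D) and, for $\tilde v$, from the drift bound $\int_t^{t'}\|\sigma z\|\le C(1+\|X\|)|t'-t|^{1/2}(\int\|z\|^2)^{1/2}$. Together these give exactly the modulus $\mathsf d$.

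\emph{DPP and viscosity property.} The DPP $v(t,\omega)=\sup_P E^P[v(s,X)-\tfrac12\int_t^s\|z\|^2]$ for $s\in[t,T]$ follows from the standard measurable selection, conditioning and pasting properties of relaxed control rules, as in the proofs of \cite{CK_25} and \cite{EKNJ88}. Continuity of $v$ lets us avoid universal measurability issues. For the subsolution property, I would take a test function $\phi\in C^{1,2}_{pol}$ touching from above at $(t,\omega)$ and assume $\p\phi+H<0$ there. Continuity of the coefficients in $(t,\lambda)$ and their Lipschitz property in $\omega$ give $\p\phi+H\le -\eta$ on a small $\mathsf d$-neighborhood. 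I would apply the functional It\^o formula to $\phi(\cdot,X)$ under any rule up to a small exit time $h_\delta$ of that neighborhood, then use the DPP together with $E^P[h_\delta]\ge c>0$ uniformly in $P$ to obtain a contradiction. For $\widetilde G$, the integrand is pointwise bounded by $\widetilde G$ because of the $-\tfrac12\|h\|^2$ term, which handles the unbounded $z$. The supersolution property uses constant controls $(\lambda,h)$ and a short-time It\^o expansion.

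The step I expect to be the main obstacle is the uniform lower bound on the small exit times, and with it the It\^o expansion for $\tilde v$ when $z$ is unbounded. I would try to control these through the energy bound and Chebyshev's inequality, localizing the event $\{\int_t^{t+\delta}\|z\|^2\ \text{large}\}$.
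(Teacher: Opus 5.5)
Your boundedness argument, part \ref{item:value_solution:lhs}, and the viscosity part of \ref{item:value_solution:rhs} are sound. Your exit-time contradiction argument for the subsolution property is a legitimate and more elementary alternative to the paper's route. Inside the neighbourhood everything is bounded, and $\int_t^{t+h}\|\sigma z\|\le Ch^{1/2}(\int\|z\|^2)^{1/2}$ together with Chebyshev gives $E^P[h_\delta-t]\ge c>0$ uniformly over energy-bounded rules. The paper instead stops when $\int\|z\|^{3/2}$ reaches $R$, uses the stopped moment bounds of Lemma~\ref{lem: stopped moment bound} to handle polynomially growing test functions, and gets continuity at $0$ via compactness of $\cR_M(t,\omega)$ in a local $\tfrac32$-Wasserstein topology and Berge's theorem. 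Note that you need the DPP at stopping times, as in \cite[Theorem~3.4]{ElKa15}, not only at deterministic times.

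The genuine gap is the $\mathsf{d}$-Lipschitz continuity of $\tilde v$ in $\omega$. When you reuse the control $z$ and the noise for $\omega'$, the difference $X-X'$ contains $\int(\sigma(s,X,\lambda)-\sigma(s,X',\lambda))\,z\,M^*(ds,d\lambda,dz)$. The Gronwall coefficient $L\|z_s\|$ of this term is random and unbounded. Cauchy--Schwarz bounds its square by $L^2\int_t^T\|X-X'\|_s^2\,ds\cdot\int_t^T\|z_s\|^2\,ds$, and the expectation of this product is not controlled by $E^P[\int\|z\|^2]\le M$, which holds only in expectation.

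In fact, the estimate $E^P\|X-X'\|_T\le C\|\omega-\omega'\|_t$ with $C$ depending only on $M$ is false. Take $d=r=1$, $\mu=0$, $\sigma(t,\omega,\lambda)=\omega(t)$. Then $X_T-X'_T=(\omega(t)-\omega'(t))\exp\big(\int_t^Tz_s\,ds+W_T-W_t-\tfrac{T-t}{2}\big)$. Set $\ell=\tfrac{T-t}{2}$ and choose $z\equiv K$ on $[t+\ell,T]$ on the event $\{W_{t+\ell}-W_t>a\}$ of probability $M/(K^2\ell)$. This keeps the energy equal to $M$, yet the ratio $E|X_T-X'_T|/|\omega(t)-\omega'(t)|$ is at least $Me^{K\ell}/(K^2\ell)$.

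Localizing at $\rho_R=\inf\{s\ge t:\int_t^s\|z_r\|^2dr\ge R\}$ only gives
$|\tilde v(t,\omega)-\tilde v(t,\omega')|\le Ce^{C(1+R)}\|\omega-\omega'\|_t+2\|\psi\|_\infty M/R$.
After optimizing over $R$ this is a logarithmic modulus, not a Lipschitz bound. Your time-regularity step inherits the problem, since it uses the spatial bound at time $t'$.

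This is not cosmetic. Lemma~\ref{lem: VF on lip} identifies $e^{\tilde v}$ with $\cE(e^\psi)$ through uniqueness in the class of bounded $\mathsf{d}$-Lipschitz solutions (Remark~\ref{remark:zhou_comparison}), so continuity alone does not suffice. The paper leaves this verification to the reader and offers no ready-made argument. You need an additional idea, for instance a coupling that adapts the control for $\omega'$ rather than reusing $z$, or a reduction to rules whose energy is controlled pathwise rather than only in expectation.
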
 

\begin{remark}
    In case the domain of uncertainty \(\{ (\mu (t, \omega, \lambda), \sigma \sigma^* (t, \omega, \lambda)) : \lambda \in \Lambda \}\) is convex, part~\ref{item:value_solution:lhs} follows directly from \cite[Theorem~5.1]{CN_25_JEEQ} and \cite[Theorem~4.13]{CN_23_EJP}.
\end{remark}

\begin{proof}[Proof of Lemma \ref{lem: value solution}]
For part \ref{item:value_solution:lhs}, it follows almost verbatim as in the proof of \cite[Theorem~4.13]{CN_23_EJP} that \(u\) is the unique bounded \(\mathsf{d}\)-Lipschitz continuous viscosity solution to the \(G\)-backward equation with terminal value~\(\psi\).  

\smallskip

We show \ref{item:value_solution:rhs}, i.e., that \(\tilde{v}\) is a bounded \(\mathsf{d}\)-Lipschitz continuous viscosity solution to the \(\widetilde{G}\)-backward equation with terminal value \(\psi\). We will adapt arguments from the proof of \cite[Theorem~4.12]{CN_23_EJP} to our situation with unbounded action space. The main technical difficulty is related to the fact that our concept of viscosity solution uses test functions of arbitrary polynomial growth, while the controlled processes under consideration may only have a second moment. We use the stopping argument in Lemma \ref{lem: stopped moment bound} below to solve this problem. 
To shorten our discussion, we only detail the subsolution property, leaving the supersolution case and the straightforward verification of the \(\mathsf{d}\)-Lipschitz continuity to the reader. 
By the definition of the relaxed control rules $\cR (t, \omega)$, we have \(\tilde{v} (T, \omega) = \psi (\omega)\). To show the subsolution property, take \((t,\omega) \in \of 0,T \of \) and \( \phi \in C^{1,2}_{pol}(\of t,T \gs ; \bR )\) satisfying
\[
	0 = (\tilde{v} - \phi)(t,\omega) = \sup \{ (\tilde{v} - \phi)(s,\omega') \colon (s,\omega') \in \of t,T \gs \}.
\]
For a fixed \(R > 0\), set 
\[
	\rho := \inf \Big\{ s \geq 0 \colon \int_t^{s + t} \int_{\Lambda \times \bR^r} \| z \|^{3/2} \, M^* (dr, d \lambda, dz) \geq R \Big\}. 
\] 
The proof of the following technical stopping lemma is postponed after the proof of Lemma~\ref{lem: value solution}.
\begin{lemma} \label{lem: stopped moment bound}
    Let Assumptions~\ref{SA: control}~\ref{item:SA:continuity in control}--\ref{item:SA:growth} hold and take \(p \in \mathbb{N},\, T > 0,\, t \in \mathbb{R}_+\), and \(\omega \in \Omega\).
	Then, there exists a constant \(C > 0\), only depending on $t$, $\omega$, $R$, $p$, $T$, and the linear growth constants from Assumption~\ref{SA: control}~\ref{item:SA:growth}, such that 
        \[
        \sup_{P \in \cR (t, \omega)} E^P \Big[ \| \X \|_{T \wedge \rho\, + \, t}^p \Big] \leq C.
        \]
\end{lemma}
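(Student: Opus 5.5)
Under \(P \in \cR(t,\omega)\), the canonical process is a continuous semimartingale after time \(t\), with drift \(\int_{\Lambda\times\bR^r}(\mu(s,\X,\lambda)+\sigma(s,\X,\lambda)z)\,M^*(ds,d\lambda,dz)\) and quadratic variation density \(\int \sigma\sigma^*(s,\X,\lambda)\,M^*(ds,d\lambda,dz)\). The plan is to represent \(\X\) on \([t, t+(T\wedge\rho)]\) as \(\omega(t)\) plus drift plus a continuous local martingale, and then run the standard Gronwall-type moment estimate, localized by \(\rho\). The localization is what makes the control term manageable. Set \(S := t + (T\wedge \rho)\). Because \(\int_t^{S}\int \|z\|^{3/2}\,dM^* \le R\), Hölder's inequality on \([t,S]\), an interval of length at most \(T\), gives
\[
\int_t^{S}\int \|z\|\,M^*(dr,d\lambda,dz) \le T^{1/3} R^{2/3}
\]
deterministically, i.e.\ \(P\)-a.s.\ for every \(P\). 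This bound is uniform in \(P\).

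We work with \(Y_u := \|\X\|_{u}\) for \(u \le S\) and assume \(p\ge 2\) without loss of generality (Jensen handles smaller \(p\)). For the drift, the linear growth condition in Assumption~\ref{SA: control}~\ref{item:SA:growth} gives the pathwise bound
\[
\Big\|\int_t^u \int (\mu+\sigma z)\,dM^*\Big\| \le C_T\int_t^u (1+Y_r)\,dr + C_T \int_t^u (1+Y_r) \int\|z\|\,M^*(dr,\cdot) \le C_T (1+Y_u)\big(T + T^{1/3}R^{2/3}\big).
\]
This bound is not small, so plugging it in directly would not close the estimate. Instead we apply a pathwise Gronwall-type argument to the measure \(dA_r := dr + \int\|z\|\,M^*(dr,\cdot)\), whose total mass on \([t,S]\) is bounded by \(K := T + T^{1/3}R^{2/3}\). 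This yields
\[
Y_u \le \big(\|\omega\|_t + C_T K + \sup_{r\le u}\|N_r\|\big)\, e^{C_T K},
\]
where \(N\) denotes the martingale part. The Gronwall inequality for a Stieltjes measure with bounded mass, namely \(f(u)\le a(u) + c\int f\,dA\) with \(a\) nondecreasing implies \(f(u) \le a(u)e^{cA(u)}\), applies because \(A\) is continuous.

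It remains to control \(E^P[\sup_{r\le u\wedge S}\|N_r\|^p]\). By the Burkholder--Davis--Gundy inequality and linear growth of \(\sigma\),
\[
E^P\big[\sup_{r\le u\wedge S}\|N_r\|^p\big] \le c_p\, E^P\Big[\Big(\int_t^{u\wedge S} C_T^2 (1+Y_r)^2\,dr\Big)^{p/2}\Big] \le c_p C_T^p T^{p/2-1} \int_t^u E^P\big[(1+Y_{r\wedge S})^p\big]\,dr.
\]
Raising the pathwise bound to the power \(p\) and taking expectations gives \(\Phi(u) := E^P[Y^p_{u\wedge S}]\) satisfying \(\Phi(u) \le C_1 + C_2\int_t^u \Phi(r)\,dr\). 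The constants depend only on \(\|\omega\|_t\), \(R\), \(p\), \(T\) and \(C_T\); in particular they do not depend on \(P\).

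The expected obstacle is that \(\Phi\) might be infinite, so the Gronwall inequality cannot be applied as it stands. To deal with this, we additionally stop at \(\theta_n := \inf\{s\ge t\colon \|\X_s\|\ge n\}\), run the same argument with \(S\wedge\theta_n\) (where everything is bounded by \(n\vee\|\omega\|_t\)), obtain a bound that is uniform in \(n\), and let \(n\to\infty\) using Fatou's lemma. A further technical point is that the martingale problem formulation must be converted into a semimartingale decomposition of \(\X\). For this we use the standard fact that the martingale property for \(C^2_b\) test functions, localized, extends to \(\varphi(x)=x_i\) and \(x_ix_j\), which identifies the characteristics. Taking the supremum over \(P\in\cR(t,\omega)\) then gives the claim.
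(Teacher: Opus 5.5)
Your proof is correct, but it handles the control term differently from the paper.

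The paper stays entirely at the level of expectations. After BDG it splits the cross term by Hölder's inequality, $\bigl(\int\|\sigma\|\,\|z\|\,dM^*\bigr)^p\le\bigl(\int\|\sigma\|^3\,dM^*\bigr)^{p/3}\bigl(\int\|z\|^{3/2}\,dM^*\bigr)^{2p/3}$. It bounds the second factor by $R^{2p/3}$ using the stopping at $\rho$. Assuming $p\ge 3$, it turns the first factor into $T^{p/3-1}\int\|\sigma\|^p\,ds\le C\int(1+\|\X\|_s^p)\,ds$. A single Gronwall inequality for $\beta\mapsto E^P[\|\X\|^p_{\beta\wedge\xi+t}]$ then closes the estimate.

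You use the $3/2$-bound only to get the deterministic estimate $\int_t^S\int\|z\|\,dM^*\le T^{1/3}R^{2/3}$. You then absorb the whole drift by a pathwise Gronwall inequality against the random continuous measure $dr+\int\|z\|\,M^*(dr,\cdot)$, and run a second, expectation-level Gronwall only for the BDG-controlled martingale part.

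What each route buys:
\begin{itemize}
\item The paper's argument is shorter, with one Gronwall step. However, its Hölder splitting needs a power strictly larger than one on $\|z\|$ to produce a time integral of $\|\sigma\|^p$.
\item Your two-stage argument needs only a pathwise $L^1$-bound on the control. It would therefore work under any localization that controls $\int\|z\|\,dM^*$.
\end{itemize}
Both proofs remove the possible infiniteness of the moments the same way: the paper localizes at level $\ell$, you at $\theta_n$, and each lets the level go to infinity with Fatou's lemma.
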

	
Now, fix \(0 < u < T - t\). By dynamic programming, see \cite[Theorem~3.4]{ElKa15}, we have
	\begin{align} 
		0 &= \sup_{P \in \cR (t, \omega)} E^P \Big[ \tilde{v} (u  \wedge \rho + t, X)  -  \tilde{v} (t, \omega) - \frac{1}{2} \int_t^{u \wedge \rho + t} \int_{\Lambda \times \bR^r} \|z\|^2 \, M^* (ds, d\lambda, dz) \Big] 
        \\&= \sup_{P \in \cR_M (t, \omega)} E^P \Big[ \tilde{v} (u  \wedge \rho + t, X)  -  \tilde{v} (t, \omega) - \frac{1}{2} \int_t^{u \wedge \rho + t} \int_{\Lambda \times \bR^r} \|z\|^2 \, M^* (ds, d\lambda, dz) \Big] 
		\\&\leq \sup_{P \in \cR_M(t, \omega)} E^P \Big[ \phi (u \wedge \rho + t,X) - \phi (t, \omega) - \frac{1}{2} \int_t^{u \wedge \rho + t} \int_{\Lambda \times \bR^r} \|z\|^2 \, M^* (ds, d\lambda, dz) \Big],
        \label{eq: visco sub conseq of DPP}
	\end{align}
    where 
    \[
    \cR_M (t, \omega) := \Big\{ P \in \cR (t, \omega) : E^P \Big[\int_0^\infty \int_{\Lambda \times \bR^r} \| z \|^2 \, M^* (ds, d\lambda, dz) \Big] \leq M \Big\}
    \]
    with \(M = 8 \| \psi \|_\infty\) and, for the second equality, we used the fact that the optimization ignores measures from \(\cR (t, \omega) \setminus \cR_M (t, \omega)\).
	Now fix \(P \in \cR_M (t, \omega)\). Using the functional  It\^o formula \cite[Proposition~7]{CF_10}, we obtain that 
	\begin{equation*} \begin{split}
			\phi (u + t&, X) - \phi (t, \omega) 
			\\&= \int_t^{u + t} \p \phi (s, X)  \, d s  + \frac{1}{2} \int_t^{u + t} \int_{\Lambda \times \bR^r} \on{tr} \big[ \sigma \sigma^* (s,X, \lambda)  \nabla^2 \phi (s,X) \big] \, M^* (ds, d \lambda, d z)
			\\&\hspace{0.5cm} + \int_t^{u + t} \langle \nabla \phi (s,X) , d X_{s} \rangle.
		\end{split}
	\end{equation*}
	By virtue of the linear growth condition, the polynomial growth of \(\nabla \phi\) and Lemma~\ref{lem: stopped moment bound}, when stopped at \(\rho\), the local martingale part of the stochastic integral above is a true martingale and we find
	\begin{align*}
		E^P \Big[ &\int_t^{u \wedge \rho + t} \langle \nabla \phi (s, X), d X_s\rangle \Big] 
        = E^P \Big[ \int_t^{u \wedge \rho + t} \langle \nabla \phi (s, X), \mu (s, X, \lambda) + \sigma (s, X, \lambda) z \rangle \, M^* (ds, d\lambda, dz) \Big],
	\end{align*}
	which implies that
	\begin{equation} \label{eq: after martingale part vanish}
		\begin{split}
			E^P \Big[ \phi (u \wedge \rho &+ t,  X) - \phi (t, \omega) - \frac{1}{2} \int_t^{u \wedge \rho + t} \int_{\Lambda \times \bR^r} \|z\|^2 \, M^* (ds, d\lambda, dz)\Big] 
			\\&= E^P \Big[ \int_t^{u \wedge \rho + t} \p \phi (s, X ) \, d s + \int_t^{u \wedge \rho + t} \widetilde{L} (s, \lambda, z) \, M^* (ds, d \lambda, dz)  \Big], 
		\end{split}
	\end{equation}
	where
	\[
	\widetilde{L} (s, \lambda, z) := \langle \nabla \phi (s, X), \mu (s, X, \lambda) + \sigma (s, X, \lambda) z \rangle + \frac{1}{2} \on{tr} \big[ \sigma \sigma^* (s,X, \lambda)  \nabla^2 \phi (s,X) \big] - \frac{1}{2} \|z\|^2.
	\]
	Set 
	\[
	\mathfrak{G} (u, X) := \p \phi (u + t, X) + \widetilde{G}(u + t, X, \nabla \phi(u + t,X), \nabla^2 \phi(u + t,X)).
	\] 
	From equations \eqref{eq: visco sub conseq of DPP} and \eqref{eq: after martingale part vanish}, we get that 
	\begin{align}\label{eq: subsolution main ineq}
		0 \leq \frac{1}{u} \int_0^u \sup_{P \in \cR_M(t, \omega)} E^P \big[ \mathfrak{G} (s, X) \1_{\{s \leq \rho \}} \big] \, ds, \quad u > 0.
	\end{align}
	We now show that 
	\begin{align} \label{eq: to show continuous} 
	s \mapsto \sup_{P \in \cR_M(t, \omega)} E^P \big[ \mathfrak{G} (s, X) \1_{\{s \leq \rho \}} \big]
	\end{align} 
	is continuous in zero. As a consequence, Lebesgue's differentiation theorem yields that 
	\[
	0 \leq \lim_{u \searrow 0} \frac{1}{u} \int_0^u \sup_{P \in \cR_M(t, \omega)} E^P \big[ \mathfrak{G} (s, X) \1_{\{s \leq \rho \}} \big] \, ds = \sup_{P \in \cR_M(t, \omega)} E^P \big[ \mathfrak{G} (0, X) \big] = \mathfrak{G} (0, \omega), 
	\] 
	which proves that \(\tilde{v}\) has the subsolution property. It remains to show that the map \eqref{eq: to show continuous} is continuous in zero. 
    We define 
    \[
    \m' := \Big\{ m \in \mathbb{M}^* : \int_0^T \int_{\Lambda \times \bR^r} \|z\|^{3/2} \, m (ds, d\lambda, dz) < \infty \, \text{ for all } T > 0 \Big\}
    \]
    and endow this space with the local \(\frac{3}{2}\)-Wasserstein topology, i.e., the weakest topology such that, for every \(T > 0\), the map \(M \mapsto \1_{[0, T]} (s) \, M(ds, d\lambda, dz)\) is continuous in the \(\frac{3}{2}\)-Wasserstein topology. Note that the choice of the power \(3/2\) is quite arbitrary. Any power from \((1, 2)\) may serve our purpose. 
    Thus, for every \(s \in \bR_+\), the map
    \[
    M \mapsto \int_t^{s + t} \int_{\Lambda \times \bR^r} \|z\|^{3/2} \, M (dr, d\lambda, dz)
    \]
    is continuous on \(\m'\).
    Hence, the set 
	\[
	\{ \rho \leq s \} =  \Big\{ \int_t^{s + t} \int_{\Lambda \times \bR^r} \|z\|^{3/2} \, M^* (dr, d \lambda, dz) \geq R \Big\}
	\] 
	is closed in \(\m'\) and consequently, \(\rho\) is lower semicontinuous on \(\m'\). Let \((s_n, m_n)_{n = 1}^\infty \subseteq (0, \infty) \times \m'\) be a sequence such that \((s_n, m_n) \to (0, m)\). Recall that the set \(A := \{ m_n \colon n \in \mathbb{N}\} \cup \{m \}\) is compact in~\(\m'\). Then, as lower semicontinuous functions attain their minimal values on compact sets, there exists an \(m^* \in A\) such that 
    \[
    \inf_{m \in A} \varrho (m) = \varrho (m^*) > 0, 
    \]
    where we use that \(\varrho\) is a strictly positive function. 
	Using this fact, it follows that \[\lim_{n \to \infty} \1_{\{s_n \, \leq \, \rho (m_n)\}} = 1.\]
    As a consequence, using Assumption \ref{SA: control}, we obtain that
	\[
\lim_{n \to \infty} \mathfrak{G} (s_n, \alpha_n) \1_{\{s_n \leq \rho (m_n)\}} = \mathfrak{G} (0, \alpha)
	\] 
	for every sequence \((\alpha_n)_{n = 1}^\infty \subseteq \Omega\) with \(\alpha_n \to \alpha\). 
    In the following, we consider the set \(\cR_M (t, \omega)\) as a subset of probability measures on \(\Omega \times \m'\).
    Using that \(\mathfrak{G}\) grows at most polynomially and Lemma~\ref{lem: stopped moment bound}, it follows that the map 
	\[
	(s, P) \mapsto E^P \big[ \mathfrak{G} (s, X) \1_{\{s \leq \rho \}} \big]
	\] 
	is continuous at any point of \(\{0\} \times \cR_M (t, \omega)\). Thus, by the compactness of \(\cR_M (t, \omega)\), which follows similar to the proof of Proposition~\ref{prop: 2 step to Hausdorff}~\ref{item:prop: 2 step to Hausdorff:a} below, enhancing the set \eqref{eq: sets in M compact} through \cite[Corollary~5.6]{CD_18_I} to account for the local Wasserstein topology on \(\m'\), and
    Berge's maximal theorem, see, e.g., \cite[Theorem~1.3.4]{hu_papa}, we conclude that the map \eqref{eq: to show continuous} is continuous in zero. This completes the proof of the subsolution property. As mentioned above, we skip the proof of the supersolution property and the \(\mathsf{d}\)-Lipschitz continuity, noting that it proceeds along similar lines as the proofs of \cite[Theorems~4.12 and 4.13]{CN_23_EJP}.
\end{proof} 

\begin{proof}[Proof of Lemma~\ref{lem: stopped moment bound}]
    The claim follows from a standard Gronwall argument. Without loss of generality, we assume that \(p \geq 3\). Take \(P \in \cR (t, \omega)\) and set 
    \[
        \xi := \rho \wedge \inf \{ s \geq 0 \colon \| X \|_{s \wedge \rho + t} \geq \ell \}, \quad \ell > 0.
    \]
    By H\"older's inequality, the Burkholder--Davis--Gundy inequality, and the linear growth conditions from Assumption~\ref{SA: control}~\ref{item:SA:growth}, we obtain that, for all \(\beta \in [0, T]\), 
    \begin{align*}
        E^P \Big[ \| \X \|_{\beta \wedge \xi\, + \, t}^p \Big] & \leq 3^{p - 1} \Big(\| \omega \|_{t}^p + E^P \Big[ \Big( \int_{t}^{\beta \wedge \xi + t} \int_{\Lambda \times \bR^r}\| \mu (s, X, \lambda) + \sigma (s, X, \lambda) z \| \, M^* (ds, d\lambda, dz) \Big)^p \\&\hspace{4.5cm}+ \Big( \int_t^{\beta \wedge \xi + t} \int_{\Lambda \times \bR^r} \| \sigma (s, X, \lambda) \|^2 \, M^* (ds, d\lambda, dz) \Big)^{p / 2} \Big] \Big)
        \\&\leq C \Big( 1 +  T \, E^P \Big[ \int_t^{\beta \wedge \xi + t}\int_{\Lambda \times \bR^r} \| \mu (s, X, \lambda) \|^p \, M^* (ds, d \lambda, dz) \Big] 
        \\&\hspace{0.5cm}+ E^P \Big[ \Big( \int_t^{\beta \wedge \xi + t}\int_{\Lambda \times \bR^r} \| \sigma (s, X, \lambda) \|^3 \, M^* (ds, d\lambda, dz) \Big)^{p/3} \\&\hspace{4cm} \cdot \Big( \int_t^{\beta \wedge \xi + t} \int_{\Lambda \times \bR^r}\| z \|^{3/2} \, M^* (ds, d \lambda, dz) \Big)^{2p / 3}\Big] 
        \\&\hspace{0.5cm}+ \Big( \int_t^{\beta \wedge \xi + t} \int_{\Lambda \times \bR^r}\| \sigma (s, X, \lambda) \|^2 \, M^* (ds, d\lambda, dz) \Big)^{p / 2} \Big] \Big) 
        \\&\leq C \Big( 1 + \int_0^{\beta} E^P \big[ \| X \|_{s \wedge \xi + t}^p \big] \, ds \Big),
    \end{align*}
    where the constant \(C > 0\) only depends on \(t, \omega, R, p, T\) and the linear growth constants from Assumption~\ref{SA: control}~\ref{item:SA:growth} and changed from the second to third inequality.
    From this point, applying Gronwall's lemma and taking \(\ell \to \infty\) implies the claim. 
\end{proof}

\begin{lemma} \label{lem: VF on lip}
Let Assumptions~\ref{SA: control}~\ref{item:SA:continuity in control}--\ref{item:SA:growth} hold.
For all \(\psi \in \Lip_b (\Omega; \bR)\) and~\((t, \omega) \in \of 0, T \gs\), 
\[
\tilde{v} (t, \omega) = \log \cE_t (e^{\psi}) (\omega)
\]
with \(\tilde{v}\) as in Lemma~\ref{lem: value solution}~\ref{item:value_solution:rhs}.
\end{lemma}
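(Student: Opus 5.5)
The plan is the classical logarithmic (Hopf--Cole) transform combined with the uniqueness statement in Lemma~\ref{lem: value solution}~\ref{item:value_solution:lhs}. Set $w := e^{\tilde v}$ on $\of 0, T\gs$. Since $\psi$ is bounded and, by Lemma~\ref{lem: value solution}~\ref{item:value_solution:rhs}, $\tilde v$ is bounded and $\mathsf{d}$-Lipschitz, the function $w$ is also bounded and $\mathsf{d}$-Lipschitz, because $\exp$ is Lipschitz on bounded sets. Moreover $w$ is bounded away from zero and satisfies $w(T,\cdot) = e^{\psi}$. Note that $e^{\psi} \in \Lip_b(\Omega;\bR)$ and $e^{\psi(\omega)} = e^{\psi(\omega(\cdot\wedge T))}$. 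Hence, by Lemma~\ref{lem: value solution}~\ref{item:value_solution:lhs} applied with terminal value $e^{\psi}$, the function $u := \cE_\cdot(e^{\psi})$ is the \emph{unique} bounded $\mathsf{d}$-Lipschitz viscosity solution of the $G$-backward equation with terminal value $e^{\psi}$. It therefore suffices to show that $w$ is a viscosity solution of the same equation. Then $w = u$, and taking logarithms gives the claim.

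The key algebraic step is the pointwise identity for smooth $\phi$ with $\phi > 0$ and $f := \log \phi$:
\[
\tfrac{1}{\phi}\big(\p \phi + G(\cdot,\nabla\phi,\nabla^2\phi)\big) = \p f + \widetilde G(\cdot,\nabla f,\nabla^2 f).
\]
To verify it, use $\nabla \phi = \phi \nabla f$ and $\nabla^2 \phi = \phi(\nabla^2 f + \nabla f \nabla f^{*})$, and observe that for each fixed $\lambda$,
\[
\tfrac12 \|\sigma^* \nabla f\|^2 = \sup_{h \in \bR^r}\big\{\langle \sigma h, \nabla f\rangle - \tfrac12\|h\|^2\big\}.
\]
The supremum over $\lambda$ commutes with multiplication by $\phi > 0$, so the identity holds with the first-order and trace terms matching exactly.

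For the subsolution property, let $\phi \in C^{1,2}_{pol}$ touch $w$ from above at $(t,\omega)$. Since $w$ is bounded below by a positive constant $c$, I replace $\phi$ by a test function that is positive everywhere while keeping the touching property. Concretely, compose with a smooth increasing function equal to the identity on $[c/2,\infty)$ and bounded below by $c/4$; the derivatives at $(t,\omega)$ are unchanged since $\phi(t,\omega) = w(t,\omega) \ge c$. Then $f := \log\phi$ belongs to $C^{1,2}_{pol}$, because logarithms of a function bounded below have derivatives that are polynomial in those of $\phi$. By monotonicity of $\log$, $f$ touches $\tilde v$ from above at $(t,\omega)$. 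The subsolution property of $\tilde v$ gives $\p f + \widetilde G \ge 0$ at $(t,\omega)$, and the identity above transfers this to $\p\phi + G \ge 0$. The supersolution property is handled symmetrically.

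The main obstacle is this test-function bookkeeping. One must check that the modified $\phi$ and the function $\log\phi$ genuinely lie in the class $C^{1,2}_{pol}$ of Appendix~\ref{app: test}, including its horizontal and vertical regularity and polynomial growth. One must also check that the touching condition, a global supremum over $\of t, T\gs$, is preserved by the monotone transformations. Once that is settled, the remaining steps (the identity and the boundedness and Lipschitz properties of $w$) are routine, and uniqueness from Lemma~\ref{lem: value solution} concludes the proof.
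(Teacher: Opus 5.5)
Your proposal is correct and follows the paper's proof. Like the paper, you set $v=e^{\tilde v}$, transfer boundedness and $\mathsf{d}$-Lipschitz continuity, use $\log\phi$ as a test function for $\tilde v$ via the identity $\widetilde G(\nabla\log\phi,\nabla^2\log\phi)=G(\nabla\phi,\nabla^2\phi)/\phi$, and conclude by the uniqueness in Lemma~\ref{lem: value solution}~\ref{item:value_solution:lhs}. The only cosmetic difference is that you apply the cut-off in both cases, whereas the paper needs it only for the supersolution property, since for subsolutions $\phi\ge v\ge e^{\inf\tilde v}>0$ already.
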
 
\begin{proof}
    Thanks to Lemma~\ref{lem: value solution}~\ref{item:value_solution:rhs}, 
    the function \(\tilde{v}\) is a bounded \(\mathsf{d}\)-Lipschitz continuous viscosity solution to the \(\widetilde{G}\)-backward equation with terminal value \(\psi\). In the following, we show that \(v := e^{\tilde{v}}\) is a bounded \(\mathsf{d}\)-Lipschitz continuous viscosity solution to the \(G\)-backward equation with terminal value \(e^\psi\). As \(u (t, \omega) = \cE_t (e^\psi) (\omega)\) is the unique such viscosity solution by Lemma~\ref{lem: value solution}~\ref{item:value_solution:lhs}, we may conclude that \(u = v\), which is the claim of Lemma~\ref{lem: VF on lip}. 
    In the remainder of the proof we show that \(v\) solves the \(G\)-backward equation. 

    {\em Step 1:} We start with the boundedness and the regularity. First, as \(\tilde{v}\) is bounded, \(v\) is trivially bounded as well. Moreover, using again that \(\tilde{v}\) is bounded, together with the local Lipschitz continuity of the exponential function, the \(\mathsf{d}\)-Lipschitz continuity transfers also from \(\tilde{v}\) to \(v\).

    {\em Step 2:} We now verify the subsolution property.
    Let $\phi \in C^{1,2}_{pol}(\of t, T \gs ; \bR)$ be such that
    \begin{equation}
        0 = (v - \phi)(t,\omega) = \sup \{ (v - \phi)(s,\omega') \colon (s,\omega') \in \of t, T \gs  \}.
    \end{equation} 
Notice that \(v \leq \phi\), which implies that \(\phi\) is bounded away from zero. Hence, \(\tilde{\phi} := \log (\phi)\) is well-defined and of polynomial growth. In particular, \(\tilde{\phi} \in C^{1,2}_{pol}(\of t, T \gs ; \bR)\) with
 \begin{align}
        \p \log (\phi) (t, \omega) &= \lim_{h \searrow 0} \frac{ \log (\phi(t + h, \omega(\cdot \wedge t))) - \log (\phi(t, \omega(\cdot \wedge t)))}{\phi(t + h, \omega(\cdot \wedge t)) - \phi(t, \omega(\cdot \wedge t))} \cdot \frac{\phi(t + h, \omega(\cdot \wedge t)) - \phi(t, \omega(\cdot \wedge t))}{h}\\
        &= \frac{\p \phi(t, \omega)}{\phi(t, \omega)},
\\
        \partial_i \log (\phi) (t, \omega) &= \lim_{h \to 0} \frac{\log (\phi (t, \omega + h e_i\1_{[t,T]})) - \log (\phi (t, \omega))}{\phi (t, \omega + h e_i\1_{[t,T]}) - \phi(t,\omega)} \cdot
        \frac{\phi (t, \omega + h e_i\1_{[t,T]}) - \phi(t,\omega)}{h}\\
        &= \frac{ \partial_i \phi (t,\omega)}{\phi(t, \omega)},
    \end{align}
    and
    \begin{equation}
        \partial_j \partial_i \log (\phi) (t, \omega)  = -\frac{ \partial _j \phi (t, \omega) \partial _i \phi (t, \omega)}{\phi(t, \omega)^2} + \frac{\partial_j\partial_i \phi (t,\omega)}{\phi(t, \omega)} .
    \end{equation}
Moreover, we have 
\[
(\tilde{v} - \tilde{\phi}) (t, \omega) = 0, \quad \tilde{v} \leq \tilde{\phi}.
\]
Thus, \(\tilde{\phi}\) is a test function that satisfies the viscosity tangency condition for the function \(\tilde{v}\) at the point \((t, \omega)\). As \(\tilde{v}\) solves the \(\widetilde{G}\)-backward equation, this entails that 
\begin{align} \label{eq: tilde G subsolution}
    \partial_t \tilde{\phi} (t, \omega) + \widetilde{G} (t, \omega, \nabla \tilde{\phi} (t, \omega), \nabla^2 \tilde{\phi} (t, \omega)) \geq 0.
\end{align}
We now relate this statement to the \(G\)-equation. 
Namely, we obtain that 
\begin{align*}
    \widetilde{G} (t&, \omega, \nabla \tilde{\phi} (t, \omega), \nabla^2 \tilde{\phi} (t, \omega)) 
    \\&= \sup \Big\{ \langle \mu (t, \omega, \lambda) + \sigma (t, \omega, \lambda) h, \nabla \tilde{\phi} (t, \omega) \rangle
	\\&\hspace{3cm}+ \tfrac{1}{2} \on{tr} \big[ \sigma (t, \omega, \lambda) \sigma^* (t, \omega, \lambda) \, \nabla^2\tilde{\phi} (t, \omega) \big] - \tfrac{1}{2} \| h \|^2 \colon \lambda \in \Lambda, h \in \mathbb{R}^r \Big\}
    \\&= \sup \Big\{ \langle \mu (t, \omega, \lambda), \nabla \tilde{\phi} (t, \omega) \rangle + \tfrac{1}{2} \| \sigma^* (t, \omega, \lambda) \nabla \tilde{\phi} (t, \omega) \|^2 
	\\&\hspace{3cm}+ \tfrac{1}{2} \on{tr} \big[ \sigma (t, \omega, \lambda) \sigma^* (t, \omega, \lambda) \, \nabla^2\tilde{\phi} (t, \omega) \big] \colon \lambda \in \Lambda \Big\}
    \\&= \sup \Big\{ \frac{1}{\phi (t, \omega)} \langle \mu (t, \omega, \lambda), \nabla \phi (t, \omega) \rangle + \frac{1}{2 \phi (t, \omega)^2} \| \sigma^* (t, \omega, \lambda) \nabla \phi (t, \omega) \|^2 
	\\&\hspace{3cm}+ \frac{1}{2 \phi (t, \omega)} \on{tr} \big[ \sigma (t, \omega, \lambda) \sigma^* (t, \omega, \lambda) \, \nabla^2\phi (t, \omega) \big]
    \\&\hspace{3cm}- \frac{1}{2 \phi (t, \omega)^2} \| \sigma^* (t, \omega, \lambda) \nabla \phi (t, \omega) \|^2 \colon \lambda \in \Lambda \Big\}
    \\&= \frac{1}{\phi (t, \omega)} G (t, \omega, \nabla \phi (t, \omega), \nabla^2 \phi (t, \omega)).
\end{align*}
Consequently, as \(\phi (t, \omega) > 0\), \eqref{eq: tilde G subsolution} entails that 
\[
\partial_t \phi (t, \omega) + G (t, \omega, \nabla \phi (t, \omega), \nabla^2 \phi (t, \omega)) \geq 0, 
\]
showing that \(v\) is a viscosity subsolution to the backward equation \(G\). 

\smallskip
{\em Step 3:} We now shortly discuss the supersolution part. In principle, it works similarly to the subsolution part with one important exception: As in the supersolution case test functions lie below the candidate supersolution, we cannot immediately conclude that admissible test functions are bounded away from zero. This problem can be overcome by a straightforward cut-off argument of the test function with a smooth increasing cut-off function $\eta_\ell \colon \bR \to (0,\infty)$ such that $\eta_\ell(r) =r$ if $r\geq \ell $ and $\eta_\ell(r)= \tfrac{\ell}{2}$ if $r \leq \tfrac{\ell}{2}$ with $\ell \in (0,\inf v/2)$.
We omit the details here for brevity.  
\end{proof}

\section{Convergence of Relaxed Control Rules} \label{sec: convergence relaxed}
Adapting the terminology from the previous sections, let \(\cR^\varepsilon (t, \omega)\) be the set of all relaxed control rules associated with the operator 
\begin{align*} 
	\widetilde{L}^\varepsilon (x, s, \omega, \lambda, h, \varphi) := \langle \mu (s, \omega, \lambda) + \sigma (s, \omega, \lambda) h, \nabla \varphi (x) \rangle + \tfrac{\varepsilon}{2} \on{tr} \big[ \sigma (s, \omega, \lambda) \sigma^* (s, \omega, \lambda) \, \nabla^2 \varphi (x) \big],
\end{align*} 
and let 
\(\cR^0 (t, \omega)\) be the set of all relaxed control rules associated to 
\begin{align*} 
	\widetilde{L}^0 (x, s, \omega, \lambda, h, \varphi) := \langle \mu (s, \omega, \lambda) + \sigma (s, \omega, \lambda) h, \nabla \varphi (x) \rangle.
\end{align*} 
For \(M > 0\), we also set 
\begin{align*}
\cR^\varepsilon_M (t, \omega) &:= \Big\{ P \in \cR^\varepsilon (t, \omega) \colon E^P \Big[ \int_0^\infty \int_{\Lambda \times \bR^r} \| z \|^2 \, M^* (dt, d \lambda, dz) \Big] \leq M \Big\}, 
\\ 
\cR^0_M (t, \omega) &:= \Big\{ P \in \cR^0 (t, \omega) \colon E^P \Big[ \int_0^\infty \int_{\Lambda \times \bR^r} \| z \|^2 \, M^* (dt, d \lambda, dz) \Big] \leq M \Big\}.
\end{align*}

For two subsets \(A, B \subseteq E\) of a metric space \((E, d_E)\), the Hausdorff distance \(d_H (A, B)\) between \(A\) and \(B\) is defined as
\[
d_H (A, B) := \max \Big\{ \sup_{a \in A} d_E(a, B), \ \sup_{b \in B} d_E (b, A) \Big\}, 
\]
where \(d_E (a, B) := \inf_{b \in B} d_E (a, b)\). As usual, we say that \((A_n)_{n = 1}^\infty\) converges to \(A\) in the Hausdorff metric topology if \(d_H (A_n, A) \to 0\). For more information on this topology, we refer to \cite[Section~1.1.1]{hu_papa}. We report a simple fact, which appears to be well-known, although we did not find a reference.

\begin{lemma} \label{lem: HD convergence}
Let \((E, d_E)\) be a metric space, \((A_n)_{n = 1}^\infty, A \subseteq E\) and assume the following:
\begin{enumerate}[(a)]
\item\label{item:lem: HD convergence:rel_comp} Every sequence \((a_n)_{n = 1}^\infty\) with \(a_n \in A_n\) is relatively compact and all of its accumulation points are in \(A\).
\item\label{item:lem: HD convergence:subsub} The set \(A\) is compact and for every \(a \in A\) and every subsequence \((n_m)_{m = 1}^\infty\) there exists a further subsequence \((N_m)_{m = 1}^\infty \subseteq (n_m)_{m = 1}^\infty\) and a sequence \((a_{N_m})_{m = 1}^\infty\) with \(a_{N_m} \in A_{N_m}\) such that \(a_{N_m} \to a\).
\end{enumerate} 
Then, \(A_n \to A\) in the Hausdorff metric topology. 
\end{lemma}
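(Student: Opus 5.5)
We have to show both halves of the Hausdorff distance tend to zero, namely
\[
\sup_{a \in A_n} d_E(a, A) \to 0
\quad\text{and}\quad
\sup_{a \in A} d_E(a, A_n) \to 0 .
\]
Each is argued by contradiction, extracting subsequences.

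\emph{First half.} Suppose $\limsup_n \sup_{a\in A_n} d_E(a,A) > \eta > 0$. Then along a subsequence $(n_m)$ there are points $a_{n_m}\in A_{n_m}$ with $d_E(a_{n_m},A)>\eta$. Complete this to a full sequence by choosing arbitrary $a_n\in A_n$ for the remaining indices; this requires $A_n\neq\emptyset$, and otherwise one works with the subsequence directly, since assumption~\ref{item:lem: HD convergence:rel_comp} is naturally read along subsequences as well. By~\ref{item:lem: HD convergence:rel_comp}, $(a_{n_m})$ has a further subsequence converging to some $a^*\in A$. Hence $d_E(a_{n_m},A)\le d_E(a_{n_m},a^*)\to0$ along that subsequence, which contradicts the bound $>\eta$.

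\emph{Second half.} Suppose $\limsup_n \sup_{a\in A} d_E(a,A_n)>\eta$. Then there are a subsequence $(n_m)$ and points $b_m\in A$ with $d_E(b_m,A_{n_m})>\eta$. Since $A$ is compact, pass to a further subsequence with $b_m\to b\in A$. Apply~\ref{item:lem: HD convergence:subsub} to the point $b$ and this subsequence. This gives a further subsequence $(N_m)$ and points $a_{N_m}\in A_{N_m}$ with $a_{N_m}\to b$. Then, along that subsequence,
\[
d_E(b_m,A_{N_m}) \le d_E(b_m,b)+d_E(b,a_{N_m}) \to 0 ,
\]
which is again a contradiction; here $b_m$ is indexed along the same subsequence.

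\emph{Where care is needed.} The main subtlety, though mild, is in the second half: one must keep the subsequence bookkeeping consistent, so that $b_m$ and $A_{N_m}$ are indexed together. One should also note that $d_E(\cdot,A_n)$ is $1$-Lipschitz; this is what lets convergence of $b_m$ to $b$ transfer to the distances, and it is exactly what the triangle inequality above uses. The possibility of empty $A_n$ is ruled out, or rendered harmless, by~\ref{item:lem: HD convergence:subsub} whenever $A\neq\emptyset$. If $A=\emptyset$, then~\ref{item:lem: HD convergence:rel_comp} forces $A_n=\emptyset$ eventually, and the claim is trivial under the usual conventions.

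Combining the two halves gives $d_H(A_n,A)\to0$.
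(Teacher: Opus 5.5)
Your argument is correct and is essentially the paper's proof. Both halves extract subsequences, use (a) in the first half and the compactness of $A$ together with (b) in the second, and close with the triangle inequality; the only difference is that you argue by contradiction instead of bounding the $\limsup$ directly through near-optimal points. The contradiction framing has one small merit: in the first half you explicitly pass to a subsequence realizing the $\limsup$ before invoking (a), a step the paper makes explicit only in its second half.
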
 
\begin{proof}
     We first show that \(\sup_{a \in A_n} d_E (a, A) \to 0\). For every \(n \in \mathbb{N}\), there exists an \(a_n \in A_n\) such that 
    \[
    \sup_{a \in A_n} d_E (a, A) \leq d_E (a_n, A) + \frac{1}{n}. 
    \]
    By Assumption \ref{item:lem: HD convergence:rel_comp}, the sequence \((a_n)_{n = 1}^\infty\) has an accumulation point \(\lim_{m \to \infty} a_{n_m} = a_0 \in A\). Thus, 
    \[
    \limsup_{n \to \infty} \sup_{a \in A_n} d_E (a, A)  \leq \limsup_{m \to \infty} d_E (a_{n_m},A) = d (a_0, A) = 0,
    \]
    which proves \(\sup_{a \in A_n} d_E (a, A) \to 0\). 

    We turn to the proof of \(\sup_{a \in A} d_E (a, A_n) \to 0\).
    There exists a sequence \((a_n)_{n = 1}^\infty \subseteq A\) with 
    \[
    \sup_{a \in A} d_E (a, A_n) \leq d_E (a_n, A_n) + \frac{1}{n}.
    \]
    Moreover, passing to a subsequence \((n_m)_{m = 1}^\infty\), we have 
    \[
    \limsup_{n \to \infty} \sup_{a \in A} d_E (a, A_n) \leq \lim_{m \to \infty} d_E (a_{n_m}, A_{n_m}).
    \] 
    As \(A\) is compact, there exists an \(a_0 \in A\) and a further subsequence \((n'_m)_{m = 1}^\infty \subseteq (n_m)_{m = 1}^\infty\) such that \(\lim_{m \to \infty} a_{n'_m} = a_0\). 
    By Assumption \ref{item:lem: HD convergence:subsub}, there exists another subsequence \((N_m)_{m = 1}^\infty \subseteq (n'_m)_{m = 1}^\infty\) and a sequence \((\bar{a}_{N_m})_{m = 1}^\infty\) with \(\bar{a}_{N_m} \in A_{N_m}\) such that \(\bar{a}_{N_m} \to a_0\). Finally, we have 
    \begin{align*}
    d_E (a_{N_n}, A_{N_n}) 
    &\leq d_E (a_{N_n}, a_0) + d_E (a_0, \bar{a}_{N_n}) + d_E (\bar{a}_{N_n}, A_{N_n}) 
    \\&=  d_E (a_{N_n}, a_0) + d_E (a_0, \bar{a}_{N_n}) \to 0,
    \end{align*}
    as $n \to \infty$, which implies 
    \begin{align*}
    \limsup_{n \to \infty} \sup_{a \in A} d_E (a, A_n) &\leq \lim_{m \to \infty} d_E (a_{n_m}, A_{n_m}) = \lim_{m \to \infty} d_E (a_{N_m}, A_{N_m}) = 0. \qedhere 
    \end{align*} 
\end{proof}

\begin{proposition} \label{prop: 2 step to Hausdorff}
Let Assumptions~\ref{SA: control}~\ref{item:SA:continuity in control}--\ref{item:SA:growth} hold.
Moreover, let \(M > 0, (\varepsilon_n)_{n = 1}^\infty \subseteq (0, \infty)\) with \(\varepsilon_n \searrow 0\), and \((t_n, \omega_n)_{n = 1}^\infty \subseteq \bR_+ \times \Omega\) with \((t_n, \omega_n) \to (t, \omega) \in \bR_+ \times \Omega\). Then,
\begin{enumerate}[(a)]
        \item\label{item:prop: 2 step to Hausdorff:a} for every sequence \((P_n)_{n = 1}^\infty\) with \(P_n \in \cR^{\varepsilon_n}_M (t_n, \omega_n)\), the family \(\{ P_n \colon n \in \mathbb{N}\}\) is relatively compact (in the space of probability measures on \((\Omega \times \m^*, \cF \otimes \M^*)\) with the weak topology), and all of its accumulation points \(P\) satisfy \(P \in \cR^0_M (t, \omega)\). 
        \item\label{item:prop: 2 step to Hausdorff:b} the set \(\cR^0_M (t, \omega)\) is compact and, for every \(P \in \cR^0_M (t, \omega)\), there exists a sequence \((P_n)_{n = 1}^\infty\) such that \(P_n \in \cR^{\varepsilon_n}_M (t_n, \omega_n)\) with \(P_n \to P\) and \(P_n (\Omega \times A) = P (\Omega \times A)\) for all \(A \in \mathcal{M}^*\).
    \end{enumerate}
\end{proposition}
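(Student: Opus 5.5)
For part (a) I would first establish tightness of \(\{P_n\}\) on \(\Omega \times \m^*\). For the \(\m^*\)-marginal, the uniform bound \(E^{P_n}[\int\int \|z\|^2 M^*(dt,d\lambda,dz)] \le M\) together with compactness of \(\Lambda\) gives tightness. The sets
\[
\Big\{ m \in \m^* : \int_0^\infty \int_{\Lambda\times\bR^r} \|z\|^2 \, m(ds,d\lambda,dz) \le K \Big\}
\]
are relatively compact in the vague topology, and in fact compact because the integral is lower semicontinuous. Markov's inequality then controls the complement, and I refer to these sets as \eqref{eq: sets in M compact}.

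For the \(\Omega\)-marginal, I would write \(X = \omega_n(\cdot\wedge t_n) + A^n + \sqrt{\varepsilon_n}\, \mathrm{Mart}^n\) with
\[
A^n = \int_{t_n}^{\cdot} \int \big(\mu + \sigma z\big)\, M^*(ds, d\lambda, dz).
\]
Linear growth, a Gronwall argument as in Lemma~\ref{lem: stopped moment bound} (without stopping, since only a second moment is needed) and BDG give uniform moment bounds on \(\|X\|_T\). Then I would apply Kolmogorov/Aldous-type criteria. The drift increment over \([s,s+\delta]\) is bounded by \(C(1+\|X\|_T)(\delta + \sqrt{\delta}\, (\int\|z\|^2)^{1/2})\) by Cauchy--Schwarz, and the martingale part vanishes as \(\varepsilon_n \to 0\). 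This gives tightness of the pair, hence relative compactness by Prokhorov.

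To identify a limit point \(P\), I would show that \(P(X=\omega \text{ on } [0,t])=1\) by continuity, and then that \(N^{0}_{t,s}(\varphi)\) with \(\widetilde L^0\) is a \(P\)-martingale. The plan is to test \(E^{P_n}[(N^{\varepsilon_n}_{t_n,s}-N^{\varepsilon_n}_{t_n,r})\,\gamma]=0\) for bounded continuous \(\cG_r\)-measurable \(\gamma\), and to pass to the limit. The second-order term carries the factor \(\varepsilon_n\) and disappears. The difficulty is that \(z\mapsto \langle\sigma z,\nabla\varphi\rangle\) is unbounded, so the map \(m\mapsto \int \langle \sigma z, \nabla \varphi\rangle\, m\) is not vaguely continuous. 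I would handle this by truncating \(z\) at level \(K\): the truncated functional is continuous, using Assumption~\ref{SA: control}\ref{item:SA:continuity in control}, the Lipschitz continuity in \(\omega\), and continuity of \(t\mapsto\) integrals against \(M^*\). The remainder is bounded by \(C\, E[(1+\|X\|_T)\int \|z\|\1_{\|z\|>K}] \le C' M/K\), uniformly in \(n\), by the \(L^2\) bound. Lower semicontinuity of the cost then yields \(P\in\cR^0_M(t,\omega)\). I expect this truncation-and-limit step, together with handling the moving initial times \(t_n\), to be the main technical obstacle.

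For part (b), compactness of \(\cR^0_M(t,\omega)\) follows from (a) applied to constant sequences with \(\varepsilon_n\equiv 0\); the same arguments go through verbatim. For the approximation, given \(P\in\cR^0_M(t,\omega)\), I would take its \(\m^*\)-marginal \(Q\) and realize the control measure as a random relaxed control on a space carrying an independent \(r\)-dimensional Brownian motion \(W\). On that space I would solve the path-dependent SDE
\[
dY_s = \int \big(\mu + \sigma z\big)(s,Y,\lambda)\, M^*(ds,d\lambda,dz) + \sqrt{\varepsilon_n}\, \sigma\, dW,
\qquad Y = \omega_n \text{ on } [0,t_n].
\]
Since \(P\) is deterministic given \(M^*\), under \(P\) the path \(X\) is the unique solution of the ODE driven by \(M^*\), by the Lipschitz assumption. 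The relaxed diffusion term is expressed through a martingale measure (El Karoui--Méléard), so \(\sigma\, dW\) is understood as \(\int \sigma\, \mathcal W(ds,d\lambda,dz)\). Strong existence and uniqueness hold by the Lipschitz condition, as in \cite[Theorem~14.30]{jacod79}. Setting \(P_n\) to be the law of \((Y^n,M^*)\), the \(\m^*\)-marginal equals that of \(P\), so the cost is unchanged and \(P_n\in \cR^{\varepsilon_n}_M(t_n,\omega_n)\). Finally, a Gronwall estimate gives
\[
E\|Y^n - X\|_T^2 \le C\big(\|\omega_n(\cdot\wedge t_n)-\omega(\cdot\wedge t)\|^2 + \text{(contribution of } |t_n - t|) + \varepsilon_n\big) \to 0,
\]
coupled on the same \(M^*\), and this yields \(P_n\to P\).
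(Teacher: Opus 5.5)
Your architecture is the same as the paper's. In (a) you use moment and Aldous-type tightness together with the compact energy sublevel sets \eqref{eq: sets in M compact} and Chebyshev, then identify limits through the martingale problem. In (b) you couple on an extension through the same $M^*$, using an SDE driven by a martingale measure.

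The genuine gap is that all your quantitative estimates are unlocalized. Under Assumption~\ref{SA: control}~\ref{item:SA:growth}, $\sigma$ has only linear growth and the controls are only in $L^2$, and in that setting these estimates are false. The $z$-terms enter as products $\|\sigma(s,X,\lambda)\|\,\|z\|$ and $L\|X-Y^n\|_s\|z\|$. Gronwall therefore produces mixed moments such as $E[\|X-Y^n\|_s^2\int\|z\|^2\,M^*(ds,d\lambda,dz)]$, which the bound $E\int\|z\|^2\,M^*(ds,d\lambda,dz)\le M$ does not control.

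Concretely, take $d=r=1$, $\Lambda$ a singleton, $\mu\equiv 0$, $\sigma(s,\omega)=\omega(s)$, $t=0$, $\omega\equiv 1$, and $M^*(ds,dz)=ds\,\delta_{Z\1_{[0,1]}(s)}(dz)$. Let $P(Z=k)=ck^{-4}$ for $k\ge 1$, with the remaining mass at $Z=0$.
\begin{itemize}
\item Then $X_s=e^{Z(s\wedge 1)}$, and $P\in\cR^0_M(0,\omega)$ once $M\ge c\sum_k k^{-2}$.
\item However, $E^P[X_1^2]=\sum_k c\,e^{2k}k^{-4}=\infty$.
\item Your coupled solution is $Y^n_1=X_1\exp(\sqrt{\varepsilon_n}W_1-\varepsilon_n/2)$ with $W$ independent of $Z$. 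Hence $E|Y^n_1-X_1|^2=E[X_1^2](e^{\varepsilon_n}-1)=\infty$ for every $n$.
\end{itemize}
So the last step of (b), $E\|Y^n-X\|_T^2\to 0$ via Gronwall, cannot hold. The missing idea is exactly the paper's localization. Stop at $\rho^n_R$, the time the accumulated energy $\int\|z\|^2\,M^*$ reaches $R$. Close Gronwall for $E\|X-Y^n\|^2_{T\wedge\rho^n_R}$, which now has constant $C(1+R)$. Then use $P(\rho^n_R\le T)\le M/R$ to get $P(\|X-Y^n\|_T>\xi)\to 0$; this is all that $P_n\to P$ needs.

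The same defect affects (a).
\begin{itemize}
\item \emph{Moment bounds.} In Lemma~\ref{lem: stopped moment bound} the stopping is not there because of the power $p$. It is there to control the product $\sigma(s,X,\lambda)z$. So ``without stopping, since only a second moment is needed'' is wrong: the example and its coupled laws in $\cR^{\varepsilon_n}_M$ have infinite second moments. The unstopped bound \eqref{eq: moment bound} in the paper's part (a) is subject to the same example and must also be read with stopping. For tightness you only need $\|X\|_T$ bounded in probability uniformly in $n$. That follows from the stopped moment bound together with $P_n(\rho_\beta\le T)\le M/\beta$, after which your pathwise Cauchy--Schwarz bound on the drift increments finishes the argument.
\item \emph{Truncation remainder.} Your estimate $C\,E[(1+\|X\|_T)\int\|z\|\1_{\{\|z\|>K\}}\,M^*(ds,d\lambda,dz)]\le C'M/K$ needs the mixed moment $E[\|X\|_T\int\|z\|^2\,M^*(ds,d\lambda,dz)]$ to be bounded. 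In the example this is $\sum_k c\,e^{k}k^{-2}=\infty$.
\end{itemize}

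One repair for the identification step is to identify limit points pathwise.
\begin{itemize}
\item Consider the defect $\sup_{s\in[t,T]}\|X_s-X_t-\int_t^s\int(\mu+\sigma z)\,M^*(du,d\lambda,dz)\|\wedge 1$.
\item It is continuous on $\Omega\times\{\int\|z\|^2\,M^*\le\beta\}$, because there your truncation error is at most $C(1+\|X\|_T)\beta/K$ pathwise.
\item It tends to $0$ in $P_n$-probability: up to the discrepancy between $t_n$ and $t$, it is $\sqrt{\varepsilon_n}$ times a local martingale.
\item Both $P_n$ and $P$ put mass at most $M/\beta$ outside that set.
\end{itemize}
Hence $X$ solves the controlled ODE $P$-a.s. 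Then $N^0(\varphi)$ is constant, and $P\in\cR^0_M(t,\omega)$.
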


\begin{corollary} \label{coro: HD convergence}
Let Assumptions~\ref{SA: control}~\ref{item:SA:continuity in control}--\ref{item:SA:growth} hold.
    For every \(M > 0\), every sequence \((\varepsilon_n)_{n = 1}^\infty\) with \(\varepsilon_n \searrow 0\) and every sequence \((t_n, \omega_n)_{n = 1}^\infty \subseteq \bR_+ \times \Omega\) with \((t_n, \omega_n) \to (t, \omega)\), we have 
    \[
    \cR^{\varepsilon_n}_M (t_n, \omega_n) \to \cR^0_M (t, \omega)  
    \]
    in the Hausdorff metric topology, and \(\cR^0_M (t, \omega)\) is compact (in the space of probability measures on \((\Omega \times \m^*, \cF \otimes \M^*)\) with the weak topology).
\end{corollary}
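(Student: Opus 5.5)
This is a direct combination of Lemma~\ref{lem: HD convergence} and Proposition~\ref{prop: 2 step to Hausdorff}. We work in the space $E$ of Borel probability measures on $\Omega \times \m^*$ with the weak topology. Since $\Omega$ and $\m^*$ are Polish, $E$ is Polish, and we fix a complete metric $d_E$ metrizing it (for instance the bounded Lipschitz metric). We then take $A_n := \cR^{\varepsilon_n}_M (t_n, \omega_n)$ and $A := \cR^0_M (t, \omega)$ and check the two hypotheses of Lemma~\ref{lem: HD convergence}.

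Hypothesis~\ref{item:lem: HD convergence:rel_comp} of Lemma~\ref{lem: HD convergence} is exactly Proposition~\ref{prop: 2 step to Hausdorff}~\ref{item:prop: 2 step to Hausdorff:a}. For any choice $P_n \in A_n$, the family $\{P_n\}$ is relatively compact, so by metrizability every subsequence has a convergent further subsequence. Every accumulation point lies in $\cR^0_M (t, \omega) = A$.

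For hypothesis~\ref{item:lem: HD convergence:subsub}, compactness of $A$ is the first claim of Proposition~\ref{prop: 2 step to Hausdorff}~\ref{item:prop: 2 step to Hausdorff:b}. Now fix $P \in A$ and a subsequence $(n_m)_{m}$. Proposition~\ref{prop: 2 step to Hausdorff}~\ref{item:prop: 2 step to Hausdorff:b} gives a whole sequence $P_n \in \cR^{\varepsilon_n}_M (t_n, \omega_n)$ with $P_n \to P$. Restricting to $N_m := n_m$, the sequence $a_{N_m} := P_{n_m} \in A_{N_m}$ converges to $P$. Both hypotheses therefore hold, and Lemma~\ref{lem: HD convergence} yields $d_H(A_n, A) \to 0$; compactness of $A$ was already obtained above.

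There is no substantive obstacle, since all analytic content is contained in Proposition~\ref{prop: 2 step to Hausdorff}. The only points to check are that the weak topology on probability measures over the Polish space $\Omega\times\m^*$ is metrizable, so that Lemma~\ref{lem: HD convergence} applies with sequential compactness, and that the Hausdorff distance is taken with respect to such a fixed metric $d_E$.
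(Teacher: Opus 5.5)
Your proposal is correct and follows the paper's argument: the paper also proves the corollary directly from Lemma~\ref{lem: HD convergence} and Proposition~\ref{prop: 2 step to Hausdorff}, with part~\ref{item:prop: 2 step to Hausdorff:a} supplying hypothesis~\ref{item:lem: HD convergence:rel_comp}, and part~\ref{item:prop: 2 step to Hausdorff:b} (taking $N_m := n_m$) supplying hypothesis~\ref{item:lem: HD convergence:subsub} together with compactness. Your remark about fixing a compatible metric on the Polish space of probability measures is a harmless clarification that the paper leaves implicit.
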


\begin{proof} This follows directly from Lemma~\ref{lem: HD convergence} and Proposition~\ref{prop: 2 step to Hausdorff}.
\end{proof} 

\begin{proof}[Proof of Proposition~\ref{prop: 2 step to Hausdorff}]
   {\em Proof of \ref{item:prop: 2 step to Hausdorff:a}}: Fix a finite time horizon \(T > \sup_{n \in \mathbb{N}} t_n \vee t\).
   By definition of the set \(\cR^{\varepsilon_n}_M (t_n, \omega_n)\) and the linear growth conditions of \(\mu\) and \(\sigma\) (we denote the linear growth constant by \(C > 0\)), for every \(P \in \cR^{\varepsilon_n}_M (t_n, \omega_n)\), we obtain that 
   \begin{align*}
       E^P \big[ \| \X \|_T^2 \big] 
        &\leq 3\| \omega_n \|^2_T + 3 T E^P \Big[ \int_0^T \int_{\Lambda \times \bR^r} \| \mu (s, \X, \lambda) + \sigma (s, \X, \lambda) z \|^2 \, M^* (ds, d\lambda, dz) \Big] 
        \\& \hspace{3cm}+ 3\varepsilon_n E^P \Big[ \int_0^T \int_{\Lambda \times \bR^r} \| \sigma (s, \X, \lambda) \|^2 \, M^* (ds, d \lambda, dz) \Big]
        \\&\leq 3\| \omega_n \|^2_T + 6T E^P \Big[ \int_0^T \int_{\Lambda \times \bR^r} \big( \| \mu (s, \X, \lambda) \|^2 + \| \sigma (s, \X, \lambda) \|^2 + \| z \|^2 \big) \, M^* (ds, d \lambda, d z) \Big] 
        \\&\hspace{3cm} + 6 C \varepsilon_n E^P \Big[ \int_0^T (1 + \| \X \|_s^2) \, ds \Big]
       \\&
       \leq 3 \| \omega_n \|_T^2 + C (24 T^2 + 6 \varepsilon_n T) + 6 T M + C (24 T + 6\varepsilon_n ) \int_0^T E^P \big[ \| \X \|_s^2 \big] \, ds.
   \end{align*}
   Hence, Gronwall's lemma, modulo a stopping argument that we ignore to ease the presentation, yields that 
   \begin{align} \label{eq: moment bound} 
       E^P \big[ \| \X \|_T^2 \big] \leq C', 
   \end{align}
   where \(C' > 0\) is a constant that only depends on \(C, T, \sup_{n \in \mathbb{N}} \varepsilon_n, M\) and \(\sup_{n \in \mathbb{N}} \| \omega_n \|_T < \infty\).
   Again, the linear growth conditions on \(\mu\) and \(\sigma\) yield that, for every stopping time \(\tau \in [0, T]\) and \(\theta \in (0, T)\), we have
\begin{equation} \label{eq: holder estimate}
\begin{split}
E^P \big[ \| \X_{\tau + \theta} - \X_{\tau} \|^2 \big] &\leq 4 \theta E^P \Big[ \int_\tau^{\tau + \theta} \int_{\Lambda \times \bR^r} \| \mu (u, \X, \lambda) + \sigma (u, \X, \lambda) z \|^2 \, M^* (du, d \lambda, dz ) \Big] \\&\hspace{3cm}+ 2 \varepsilon_n E^P \Big[ \int_\tau^{\tau + \theta} \int_{\Lambda \times \bR^r} \| \sigma (u, \X, \lambda) \|^2 \, M^* (du, d \lambda, dz) \Big] 
\\&\leq \theta^2 C 16 \big( 1 + E^P \big[ \| \X \|^2_{2T} \big] \big) + \theta 4 M + \theta 4 \varepsilon_n C \big( 1 + E^P \big[ \| \X \|^2_{2T} \big] \big)
\\&\leq \theta \text{ const}.
\end{split}
\end{equation}
Consequently, thanks to \eqref{eq: moment bound} and \eqref{eq: holder estimate},  Aldous' tightness criterion (see, e.g., \cite[Theorem~VI.4.5]{JS}) yields that \(\bigcup_{n = 1}^\infty\{ P \circ \X^{-1} \colon P \in \cR^{\varepsilon_n}_M (t_n, \omega_n) \}\) is relatively compact. As the sets
\begin{align} \label{eq: sets in M compact}
\Big\{ M^* \in \mathbb{M}^* \colon \int_0^\infty \int_{\Lambda \times \bR^r} \| z \|^2 \, M^* (ds, d \lambda, dz) \leq \beta \Big\}, \quad \beta > 0,
\end{align} 
are compact in \(\mathbb{M}^*\) by \cite[Theorem~2.5]{balder_01}, a routine application of Chebychev's inequality upgrades the relative compactness to \(\bigcup_{n = 1}^\infty \cR^{\varepsilon_n}_M (t_n, \omega_n)\). This proves the first part of \ref{item:prop: 2 step to Hausdorff:a}.

It remains to prove that for \(P_n \in \cR^{\varepsilon_n}_M (t_n, \omega_n)\) the convergence \(P_n \to P\) implies \(P \in \cR^{0}_M (t, \omega)\). This follows from a standard martingale problem argument; see \cite[Proposition~IX.1.12]{JS}.
Essentially, we need to check the following conditions:
\begin{enumerate}[(i)]
\item For all \(\varphi \in C^2_b (\bR^d; \bR)\), the map \((x, t, \omega, \lambda, h) \mapsto \widetilde{L}^0 (x, t, \omega, \lambda, h, \varphi)\) is continuous; \label{item:a MPA}
\item\label{item:b MPA} For all \(T > 0, x \in \bR^d, s \in [0, T], \omega \in \Omega, \lambda \in \Lambda\), and \(h \in \bR^r\), there exists a constant \(C = C_{T, \varphi} > 0\) such that 
\[
| \widetilde{L}^\varepsilon (x, s, \omega, \lambda, h, \varphi) | \leq C \big( 1 + \| \omega \|_s^2 + \|h\|^2 \big); 
\]
\item\label{item:c MPA} For all \(\varphi \in C^2_b (\bR^d; \bR)\), \(T > 0\), and every compact set \(K \subset \Omega\), it holds that 
\[
\sup \big| \widetilde{L}^\varepsilon (x, s, \omega, \lambda, h, \varphi) - \widetilde{L}^0 (x, s, \omega, \lambda, h, \varphi) \big| \to 0, \quad \text{as } \varepsilon \searrow 0, 
\]
where the sup is taken over all \(x \in \bR^d, s \in [0, T], \omega \in K, \lambda \in \Lambda\) and \(h \in \bR^r\).
\end{enumerate} 
Item \ref{item:a MPA} follows directly from Assumptions~\ref{SA: control}~\ref{item:SA:continuity in control} and \ref{item:SA:Lipschitz}. The items \ref{item:b MPA} and \ref{item:c MPA} are direct consequences of Assumption~\ref{SA: control}~\ref{item:SA:growth}. Now, \ref{item:a MPA} yields that (a condition like) \cite[IX.1.4 (iii)]{JS} holds, \ref{item:c MPA} yields that (a condition like) \cite[IX.1.4 (iv)]{JS} holds, and \ref{item:b MPA} together with the moment bound \eqref{eq: moment bound} and the moment constraints of the controls from the definition of \(\cR_M^{\varepsilon_n} (t_n, \omega_n)\) yield that (a version of) the uniform integrability condition from \cite[IX.1.12]{JS} holds. Thus, a martingale problem argument such as \cite[Proposition~IX.1.12]{JS} yields that \(P \in \cR^0 (t, \omega)\). We omit the details for brevity. 

\smallskip
\noindent
{\em Proof of \ref{item:prop: 2 step to Hausdorff:b}}: Compactness of \(\cR^0_M (t, \omega)\) follows along the lines of the proof of \ref{item:prop: 2 step to Hausdorff:a} above and we omit the details for brevity. For a given \(P \in \cR^0_M (t, \omega)\), we now construct the approximation sequence \((P_n)_{n = 1}^\infty\) with the claimed properties. Take \(P \in \cR^0_M (t, \omega)\) and take the underlying probabilistic setup as a standard extension of \((\Omega \times \m^*, \cF \otimes \M^*, (\cG^*_t)_{t \geq 0}, P)\) that supports orthogonal martingale measures \(N = (N^k)_{k = 1}^r\) with intensity \(M^* (ds, d \lambda, dz)\). To simplify our notation, we still use \(P\) for the underlying probability measure.
Adapting classical methods for SDEs with Lipschitz coefficients, cf.\ \cite[p.~100]{EM_90}, to our underlying stochastic setup, there exists an \(\bR^d\)-valued continuous process \(Y = (Y_t)_{t \geq 0}\) such that 
	\begin{align*}
		\begin{cases} d Y^n_s =  \int_{\Lambda \times \bR^r} ( \mu (s, Y^n, \lambda) + \sigma (s, Y^n, \lambda) \, z ) \, M^* (ds, d \lambda, dz) \\\hspace{3.5cm}+ \sqrt{\varepsilon_n} \, \int_{\Lambda \times \bR^r} \sigma (s, Y^n, \lambda) \, N (ds, d \lambda, dz), & s > t_n, \\
		Y^n_s = \omega_n (s), & s \leq t_n.
        \end{cases}
	\end{align*} 
Now set \(P_n := P \circ (Y^n, M^*)^{-1}\) and notice that this measure is an element of \(\cR^{\varepsilon_n}_M (t_n, \omega_n)\) such that \(P_n (\Omega \times A) = P (\Omega \times A)\) for all \(A \in \M^*\). Thus, it remains to show that \(P_n \to P\), which
is easily seen to be implied by 
\[
\forall \, \xi, T > 0, \quad P ( \| X - Y^n \|_T > \xi ) \to 0,  
\]
as $n \to \infty$. 
Fix \(\xi, T > 0\) and, for \(R > 0\), set 
\[
	\rho_R^n := \inf \Big\{ s \geq 0 \colon \int_{t \vee t^n}^{s + (t \vee t^n)} \int_{\Lambda \times \bR^r} \| z \|^2 \, M^* (dr, d \lambda, dz) \geq R \Big\}. 
\]
Using that \(P \in \cR^0_M (t, \omega)\), we obtain that 
\[
P (\rho^n_R \leq T) \leq \frac{1}{R} \, E^P \Big[ \int_{t \wedge t^n}^T\int_{\Lambda \times \bR^r} \|z\|^2 \, M^* (ds, d\lambda, dz) \Big] \leq \frac{M}{R}.
\]
Hence,
\[
P ( \| X - Y^n \|_T > \xi ) \leq \frac{1}{\xi^2} E^P \big[ \| X - Y^n \|^2_{T \wedge \rho^n_R} \big] + \frac{M}{R}, 
\]
and we observe that it suffices to prove that, for every fixed \(R > 0\), 
\[
E^P \big[ \| X - Y^n \|^2_{T \wedge \rho^n_R} \big] \to 0, 
\]
as $n \to \infty$.
This is the program for the remainder of this proof. 
Recall that \(P\)-a.s. 
\[
\begin{cases} d X_s =\int_{\Lambda \times \bR^r} ( \mu (s, X, \lambda) + \sigma (s, X, \lambda) z ) \, M^* (ds, d\lambda, dz), & s > t, \\ X_s = \omega (s), & s \leq t. \end{cases} 
\]
In the following, \(C > 0\) denotes a generic constant independent of \(n\) that is allowed to change from line to line. 

Using \eqref{eq: moment bound} and the linear growth conditions from Assumption~\ref{SA: control}~\ref{item:SA:growth}, we obtain that
\begin{align*}
    E^P \big[ \| X - Y^n \|^2_{T \wedge \rho^n_R} \big] &\leq C \Big( \| \omega_n ( \, \cdot \wedge t_n ) - \omega (\, \cdot \wedge t ) \|^2_T \\&\qquad + E^P \Big[ \Big\| \int_{t}^{T \wedge \rho^n_R} \int_{\Lambda \times \bR^r} (\mu (s, X, \lambda) + \sigma (s, X, \lambda) z ) \, M^* (ds, d\lambda, dz) 
    \\&\qquad \qquad - \int_{t_n}^{T \wedge \rho^n_R} \int_{\Lambda \times \bR^r} (\mu (s, Y^n, \lambda) + \sigma (s, Y^n, \lambda) z ) \, M^* (ds, d\lambda, dz) \Big\|^2 \Big] + \varepsilon_n \Big). 
\end{align*}
The first term on the right hand side converges to zero by the Arzel\`a--Ascoli theorem and the last term \(\varepsilon_n\) converges to zero by assumption. Using that 
\[
  \| \mu (s, \omega, \lambda) + \sigma (s, \omega, \lambda) z \|^2 \leq C (1 + \|\omega\|_s^2 + \|z\|^2 ), 
\] the moment bounds \eqref{eq: moment bound} and \(E^P [ \int_0^\infty \int_{\Lambda \times \bR^r} \|z\|^2 \, M^* (ds, d\lambda, dz) ] \leq M\), using the Cauchy--Schwarz inequality, we obtain that 
\begin{align*}
    &E^P \Big[  \Big\| \int_{t \wedge t_n}^{t \vee t_n} \int_{\Lambda \times \bR^r} (\mu (s, X, \lambda) - \mu (s, Y^n, \lambda) + (\sigma (s, X, \lambda) - \sigma (s, Y^n, \lambda)) z ) \, M^* (ds, d\lambda, dz) \Big\|^2 \Big] 
    \\&\leq E^P \Big[ \Big( \int_{t \wedge t_n}^{t \vee t_n} \int_{\Lambda \times \bR^r} \|(\mu (s, X, \lambda) - \mu (s, Y^n, \lambda) + (\sigma (s, X, \lambda) - \sigma (s, Y^n, \lambda)) z ) \|\, M^* (ds, d\lambda, dz) \Big)^2 \Big] 
    \\&\leq E^P \Big[  \int_{t \wedge t_n}^{t \vee t_n} \int_{\Lambda \times \bR^r} \|(\mu (s, X, \lambda) - \mu (s, Y^n, \lambda) \\&\hspace{5cm}+ (\sigma (s, X, \lambda) - \sigma (s, Y^n, \lambda)) z )\|^2 \, M^* (ds, d\lambda, dz) \Big] \, |t_n - t|
    \\&
    \leq C | t_n - t | \to 0, 
\end{align*}
as \(n \to \infty\). 
In summary, we obtained the structure
\[
E^P \big[ \| X - Y^n \|^2_{T \wedge \rho^n_R} \big] \leq a_n + C\, I_n, 
\]
where \((a_n)_{n = 1}^\infty \subseteq (0, \infty)\) is a sequence with \(a_n \to 0\), and 
\begin{align*}
I_n := E^P \Big[  \Big\| \int_{t \vee t_n}^{T \wedge \rho^n_R} \int_{\Lambda \times \bR^r} (\mu (s, X, \lambda) &- \mu (s, Y^n, \lambda) \\&+ (\sigma (s, X, \lambda) - \sigma (s, Y^n, \lambda)) z ) \, M^* (ds, d\lambda, dz) \Big\|^2 \Big].
\end{align*}
Using the Lipschitz assumptions from Assumption~\ref{SA: control}~\ref{item:SA:Lipschitz} and H\"older's inequality, we estimate 
\begin{align*}
I_n &\leq C \Big( \int_{t \vee t_n}^T E^P \big[ \| X - Y^n \|^2_{s \wedge \rho^n_R} \big] \, ds \\&\hspace{3cm}+ E^P \Big[ \int_{t \vee t_n}^{T \wedge \rho_R^n} \| X - Y^n \|^2_s \, ds \, \int_{t \vee t_n}^{T \wedge \rho^n_R} \int_{\Lambda \times \bR^r} \| z \|^2 \, M^* (ds, d \lambda, dz) \Big] \Big) 
\\&\leq C (1 + R) \int_{t \vee t_n}^T E^P \big[ \| X - Y^n \|^2_{s \wedge \rho^n_R} \big] \, ds.
\end{align*}
Finally, Gronwall's lemma yields that 
\[
E^P \big[ \| X - Y^n \|^2_{T \wedge \rho^n_R} \big] \leq a_n e^{C T} \to 0,
\]
as $n \to \infty$.
This completes the proof of \(P_n \to P\) and consequently, also Part \ref{item:prop: 2 step to Hausdorff:b}.
\end{proof}

\section{Proof of Theorem~\ref{theo: main result}} \label{sec: proof main}
Take \((t, \omega) \in \T\), \(\varepsilon_n \searrow 0, t_n \to t\) and \(\omega_n \to \omega\).
In the following, we show
\begin{align}
    \limsup_{n \to \infty} V^{\varepsilon_n}_{t_n, \omega_n} &\leq V^{0}_{t, \omega}, \label{eq: to show 1}
    \\ 
    V^0_{t, \omega} &\leq \liminf_{n \to \infty} V^{\varepsilon_n}_{t_n, \omega_n}.
    \label{eq: to show 2}
\end{align}
Together, they entail \eqref{eq: main convergence statement}, proving the claim of Theorem~\ref{theo: main result}. At this point, we stress that the assumption \((t, \omega) \in \T\) is only needed for the first inequality~\eqref{eq: to show 1}.

\smallskip
\noindent
{\em Proof of \eqref{eq: to show 1}}: 
Recall the notation \(D_\delta = \{ x \in \bR^d : \inf_{y \in D} \| x - y \| \le \delta \}\).
Using that \(\tau^{t_n}_D \leq \tau^{t_n}_{D_\delta}\), and applying Theorem~\ref{theo: VF} with \(\varphi = g (\tau^{t_n}_D)\), we get that
\begin{align*}
\varepsilon_n \log \mathcal{E}^{\varepsilon_n}_{t_n} (e^{g (\tau_{D}^{t_n}) / \varepsilon_n}) &\leq
\varepsilon_n \log \mathcal{E}^{\varepsilon_n}_{t_n} (e^{g (\tau_{D_\delta}^{t_n}) / \varepsilon_n}) 
\\&= \sup_{P \in \cR^{\varepsilon_n} (t_n, \omega_n)} E^P \Big[ g (\tau_{D_\delta}^{t_n}) - \frac{1}{2} \int_0^\infty \int_{\Lambda \times \bR^r} \| z \|^2 \, M^* (dt, d\lambda, dz) \Big].
\end{align*}
Moreover, for \(M = 4 \| g \|_\infty\), we have 
\begin{align*}
\sup_{P \in \cR^{\varepsilon_n} (t_n, \omega_n)} &E^P \Big[ g (\tau_{D_\delta}^{t_n}) - \frac{1}{2} \int_0^\infty \int_{\Lambda \times \bR^r} \| z \|^2 \, M^* (dt, d\lambda, dz) \Big] 
\\&= \sup_{P \in \cR^{\varepsilon_n}_M (t_n, \omega_n)} E^P \Big[ g (\tau_{D_\delta}^{t_n}) - \frac{1}{2} \int_0^\infty \int_{\Lambda \times \bR^r} \| z \|^2 \, M^* (ds, d\lambda, dz) \Big], 
\end{align*} 
which follows from the fact that the optimization ignores measures from \(\cR^\varepsilon (t_n, \omega_n) \setminus \cR^\varepsilon_M (t_n, \omega_n)\). 
Using that 
\[
\{ \tau^0_{D_\delta} < s \} = \bigcup_{r \in \mathbb{Q} \, \cap \, [0, s)} \{ \X_r \not \in D_\delta \}  
\]
is open, the map \(\omega' \mapsto \tau_{D_\delta}^0 (\omega')\) is upper semicontinuous. Consequently, the map \((s, \omega') \mapsto \tau^s_{D_\delta} (\omega')= s + \tau^0_{D_\delta} (\theta_s \omega')\), with \(\theta_s \omega' = \omega' (\, \cdot + s )\), is upper semicontinuous as well (following from the fact that \((s, \omega') \mapsto \theta_s \omega'\) is continuous by the Arzel\`a--Ascoli theorem).
As \(m^* \mapsto \int_0^\infty \int_{\Lambda \times \bR^r} \| z \|^2 \, m^* (ds, d\lambda, dz)\) is lower semicontinuous on~\(\mathbb{M}^*\), we conclude that 
\[
(s, P) \mapsto E^P \Big[ g (\tau_{D_\delta}^s) - \frac{1}{2} \int_0^\infty \int_{\Lambda \times \bR^r} \| z \|^2 \, M^* (dr, d\lambda, dz) \Big]
\]
is bounded and upper semicontinuous on \(\bR_+ \times \mathcal{P}_M (\Omega \times \mathbb{M}^*)\), where 
\[
\mathcal{P}_M (\Omega \times \mathbb{M}^*) := \Big\{ P \in \mathcal{P} (\Omega \times \mathbb{M}^*) : E^P \Big[ \int_0^\infty \int_{\Lambda \times \bR^r} \|z\|^2 \, M^* (ds, d\lambda, dz) \Big] \leq M \Big\}. 
\]
Consequently, we deduce from Corollary~\ref{coro: HD convergence} and \cite[Lemma~12.1.7]{SV} that 
\begin{align*}
\limsup_{n \to \infty} \sup_{P \in \cR^{\varepsilon_n}_M (t_n, \omega_n)} E^P \Big[ g (\tau_{D_\delta}^{t_n}) - \frac{1}{2} \int_0^\infty \int_{\Lambda \times \bR^r} \| z \|^2 \, M^* (dt, d\lambda, dz) \Big]
\leq V^{0, \delta}_{t, \omega}.
\end{align*} 
As \((t, \omega) \in \T\), letting \(\delta \searrow 0\) entails \eqref{eq: to show 1}. 

\smallskip
\noindent
{\em Proof of \eqref{eq: to show 2}}: 
Arguing as in the proof of \eqref{eq: to show 1}, with \(M = 4 \|g\|_\infty\), we have 
\[
\varepsilon \log \mathcal{E}^\varepsilon_t (e^{g (\tau_{D}^t) / \varepsilon}) = \sup_{P \in \cR^\varepsilon_M (t, \omega)} E^P \Big[ g (\tau_{D}^t) - \frac{1}{2} \int_0^\infty \int_{\Lambda \times \bR^r} \| z \|^2 \, M^* (dt, d\lambda, dz) \Big], 
\]
As \(D\) is open, the set
\[
\{ \tau^0_D \leq s \} = \Big\{ \inf_{q \in [0, s]} \inf_{y \not \in D} \| \X_q - y \| = 0 \Big\} 
\]
is closed (as \(\omega' \mapsto \inf_{q \in [0, s]} \inf_{y \not \in D} \| \omega' (q) - y \|\) is continuous by Berge's maximum theorem). Thus, the map \(\omega \mapsto \tau^0_D (\omega)\) is lower semicontinuous. As in the previous step, this implies that \((s, \omega') \mapsto \tau^s_D (\omega')\) is lower semicontinuous and the same is true for \((s, P) \mapsto E^P [ g (\tau_{D}^s)]\). 
For an arbitrary \(\xi > 0\), let \(P^0 =P^0_\xi \in \cR^0_M (t, \omega)\) be such that 
\begin{align*}
V^0_{t, \omega} &\leq \sup_{P \in \cR^0_M (t, \omega)} E^P \Big[ g (\tau^{t}_{D}) - \frac{1}{2} \int_0^\infty \int_{\Lambda \times \bR^r} \|z\|^2 \, M^* (ds, d \lambda, dz) \Big] 
\\&\leq E^{P^0} \Big[ g (\tau^{t}_{D}) - \frac{1}{2} \int_0^\infty \int_{\Lambda \times \bR^r} \|z\|^2 \, M^* (ds, d \lambda, dz) \Big] + \xi.
\end{align*} 
Due to Proposition~\ref{prop: 2 step to Hausdorff}~\ref{item:prop: 2 step to Hausdorff:b}, there exists a sequence \((P_n)_{n = 1}^\infty\) with \(P_n \in \cR^{\varepsilon_n}_M (t_n, \omega_n)\), \(P_n \to P^0\), and \(P_n (\Omega \times \cdot \,) = P^0 (\Omega \times \cdot \,)\) on \(\M^*\). Using the lower semicontinuity of \((s, P) \mapsto E^P [g (\tau^s_{D})]\), we get that
\begin{align*}
    E^{P^0} \Big[ g (\tau^{t}_{D}) &- \frac{1}{2} \int_0^\infty \int_{\Lambda \times \bR^r} \|z\|^2 \, M^* (ds, d \lambda, dz) \Big] 
    \\&= E^{P^0} \big[ g (\tau^{t}_{D}) \big] - \lim_{n \to \infty} E^{P_n} \Big[ \frac{1}{2} \int_0^\infty \int_{\Lambda \times \bR^r} \|z\|^2 \, M^* (ds, d \lambda, dz) \Big]
    \\&\leq \liminf_{n \to \infty} E^{P_n} \big[ g (\tau^{t_n}_{D}) \big] - \lim_{n \to \infty} E^{P_n} \Big[ \frac{1}{2} \int_0^\infty \int_{\Lambda \times \bR^r} \|z\|^2 \, M^* (ds, d \lambda, dz) \Big]
    \\&= \liminf_{n \to \infty} E^{P_n} \Big[ g (\tau^{t_n}_{D}) - \frac{1}{2} \int_0^\infty \int_{\Lambda \times \bR^r} \|z\|^2 \, M^* (ds, d \lambda, dz) \Big] 
    \\&\leq \liminf_{n \to \infty} V^{\varepsilon_n}_{t_n, \omega_n}. \phantom \int
\end{align*}
Summing up, we proved that 
\[
V^0_{t, \omega} \leq \liminf_{n \to \infty} V^{\varepsilon_n}_{t_n, \omega_n} + \xi.
\]
As \(\xi > 0\) was arbitrary, \eqref{eq: to show 2} follows. \qed


\appendix

\section{The class of test functions for Definition~\ref{definition:vis_sol}} \label{app: test}
This appendix is dedicated to the definition of the set of test functions used for the path-dependent viscosity concept from Definition~\ref{definition:vis_sol}. We follow the exposition from \cite{cosso_russo_22,CN_23_EJP}.

Let \(T >0\) and consider some initial time \(t_0 \in [0,T)\). Let \(D(\bR_+; \bR^\d)\) be the space of \cadlag functions from \(\bR_+\) into \(\mathbb{R}^\d\) and define \(\Lambda(t_0) := [t_0, T] \times D(\bR_+; \bR^\d)\). On the space $\Lambda(0)$, we consider the pseudometric 
\begin{equation}\label{eq:appendix_d}
    \mathsf{d} ( (t, \omega); (t', \omega') ) := \big( 1 + \| \omega \|_t + \| \omega'\|_{t'} \big) |t - t'|^{\frac{1}{2}} + \| \omega (\cdot \, \wedge t) - \omega' (\cdot \, \wedge t') \|_T
\end{equation}
as above.

We say that a map \(F \colon \Lambda (t_0) \to \bR\) admits a \emph{horizontal derivative} at \((t, \omega) \in \Lambda (t_0)\) with $t<T$ if
\[
\p F (t, \omega) := \lim_{h \searrow 0} \frac{ F(t + h, \omega(\cdot \wedge t)) - F(t, \omega(\cdot \wedge t))}{h}
\]
exists.
At \(t = T\), the horizontal derivative is defined as its left limit, i.e., we have
\[
\p F(T, \omega) := \lim_{h \nearrow T} \p F(h,\omega).
\]
Moreover, we say that 
\(F\) admits a \emph{vertical derivative} at \((t, \omega) \in \Lambda (t_0)\) with $t < T$ if 
\[
\partial_i F (t, \omega) := \lim_{h \to 0} \frac{F (t, \omega + h e_i\1_{[t,T]}) - F (t, \omega)}{h}, \quad i = 1, 2, \dots, \d,
\]
exists, where \(e_1, \dots, e_\d\) is an orthonormal basis of \(\bR^\d\).
Accordingly, the second vertical derivatives \(\partial^2_{ij} F(t,\omega), i, j = 1, \dots, \d,\) at \((t, \omega) \in \Lambda (t_0) \) are defined as
\[
\partial^2_{ij} F(t,\omega) := \partial_i (\partial_j F)(t, \omega).
\]
We write \(\nabla F := (\partial_1 F, \dots, \partial_\d F)\) for the {\em vertical gradient} and \(\nabla^2 F := (\partial_{ij}^2 F)_{i, j = 1, \dots, \d}\) for the {\em vertical Hessian matrix}.

We denote by \(C^{1,2}(\Lambda (t_0); \bR)\) the set of functions \(F \colon \Lambda (t_0) \to \bR\) that are continuous with respect to \(\mathsf{d}\), 
such that
\(\p F, \nabla F\), and \(\nabla^2 F\)
exist everywhere on $\Lambda (t_0)$ and are continuous with respect to \(\mathsf{d}\).

Moreover, for $\of t_0, T \gs \coloneqq [t_0, T] \times \Omega$, the set 
\(C^{1,2}( \of t_0, T \gs ; \bR)\) 
contains all function \(F \colon \of t_0, T \gs  \to \bR \) 
such that there exists a function \(\hat{F} \in C^{1,2}(\Lambda(t_0); \bR)\) such that, for all $(t, \omega) \in \of t_0, T \gs$, we have $F(t, \omega) = \hat{F}(t,\omega)$.
For any such $F \in C^{1,2}( \of t_0, T \gs ; \bR)$, we define
\[
\p F(t, \omega) := \p \hat{F}(t, \omega), \quad
\nabla F(t,\omega) := \nabla \hat{F}(t,\omega), \quad
\nabla^2 F(t,\omega) := \nabla^2 \hat{F}(t,\omega)
\]
for \((t, \omega) \in \of t_0, T \gs \).
By \cite[Lemma 2.4]{Z_23}, the derivatives \(\p F, \nabla F, \nabla^2 F \) are well-defined for \(F \in C^{1,2}(\of t_0, T \gs ; \bR)\).
Finally, let \(C_{pol}^{1,2}( \of t_0, T \gs ; \bR) \) be the set of all functions \(F \in C^{1,2}( \of t_0, T \gs ; \bR)\) such that there exist constants \(C, q \geq 0\) with
\[
| \p F(t, \omega) | + \| \nabla F(t,\omega) \| + \| \nabla^2 F(t,\omega) \| \leq C \Big( 1 + \| \omega( \cdot \wedge t ) \|^q\Big)
\]
and \((t, \omega) \in \of t_0, T \gs \).

\printbibliography

\end{document}